\documentclass[
11pt,                          % standard font size
draft,                         % draft or final?
english                        % standard language
]{article}

%\renewcommand*\rmdefault{ppl}  % palatino font

%
% some macro packages
%

\usepackage[english]{babel}    % with explicit language
\usepackage{amsmath}           % ams mathematical stuff
\usepackage[amsmath,thmmarks,hyperref]{ntheorem} % nicer theorems
\usepackage{amssymb}           % ams symbols
\usepackage[utf8]{inputenc}    % smart input of funny chars
\usepackage[T1]{fontenc}       % also for the font encoding
\usepackage{exscale}           % large summation signs in 11pt
\usepackage[sort]{cite}        % nicer citations
\usepackage{hyperref}
\usepackage{eucal}             % way nicer mathcal
\usepackage[a4paper]{geometry} % geometry of page layout
\usepackage{xspace}            % better spacing after macros
\usepackage{tikz}              % for commutative diagrams and stuff
\usepackage[expansion=false    % no font expansion
           ]{microtype}        % only protrusion
\usepackage[nottoc]{tocbibind} % refs and index in the toc
% \usepackage[backref=page,      % backrefs in the bibliography
%            final=true,         % always treat as final
%            pdfpagelabels       % use pdf page labels
%            ]{hyperref}         % hyperrefs are cool!
\usepackage[all]{xy}
\usepackage{slashed}
\usepackage{cleveref}
\usetikzlibrary{matrix}
\usepackage{tikz-cd}
\usepackage{bbm}               % nicer bbm fonts
\usepackage{paralist}          % better enumerate lists
\usepackage{mathrsfs}          % nice calligraphic font
\usepackage{stmaryrd}          % more brackets
\usepackage{mathtools}         % some nice tricks for stackrel

%
% page dimensions, scaling etc. Not final yet
%

\geometry{bindingoffset=0cm}
\geometry{hcentering=true}
\geometry{hscale=0.8}
\geometry{vscale=0.8}

%
% notation environment
%

%
% nicer bbm fonts (much nicer in fact!)
%

\renewcommand{\mathbb}[1]{\mathbbm{#1}}

%
% labels of enumerates in italic with correct reference command
%

\newcommand{\refitem}[1] {\textit{\ref{#1}.)}}

%
% equations with section numbers
%

\numberwithin{equation}{section}

%
% page breaks allowed in long formulas
%

\allowdisplaybreaks

%
% more space
%

%
% better spacing with \left and \right commands. Hack from TeXexchange
%

\let\originalleft\left
\let\originalright\right
\renewcommand{\left}{\mathopen{}\mathclose\bgroup\originalleft}
\renewcommand{\right}{\aftergroup\egroup\originalright}

%
% new theorem stuff
%

\theoremheaderfont{\normalfont\bfseries}
\theorembodyfont{\itshape}
\newtheorem{lemma}{Lemma}[section]
\newtheorem{proposition}[lemma]{Proposition}
\newtheorem{theorem}[lemma]{Theorem}

\newtheorem{definition}[lemma]{Definition}

\theorembodyfont{\rmfamily}

\newtheorem{example}[lemma]{Example}
\newtheorem{remark}[lemma]{Remark}

\makeatletter
\def\theorem@checkbold{}
\makeatother

%
% The proof environments: needs ntheorem with amsmath and thmmarks as
% options and \begin{equation*}...\end{equation*} instead of \[...\]
% then also the tags are working again. Much better placing of the QED
% symbols at the end of the proofs.
%

\theoremheaderfont{\scshape}
\theorembodyfont{\normalfont}
\theoremstyle{nonumberplain}
\theoremseparator{:}
\theoremsymbol{\hbox{$\heartsuit$}}
\newtheorem{proof}{Proof}

%
% smaller lists in various math environments
% using paralist commands
%

\pltopsep0.2em
\plpartopsep0em
\plitemsep0.2em
\plparsep0.1em

\newenvironment{lemmalist}{\begin{compactenum}[\itshape i.)]}{\end{compactenum}}

%
% own local math macros follow here
%

% homomorphisms
\newcommand{\Hom}            {\operatorname{\mathsf{Hom}}}
% C^k sections
\newcommand{\Sec}[1][k]      {\Gamma^{#1}}
% smooth sections
\newcommand{\Secinfty}       {\Sec[\infty]}
% Font for Groups
\newcommand{\group}[1]        {\mathrm{#1}}
% Font for algebras

% Lie derivative

% Schouten bracket
\DeclarePairedDelimiter{\Schouten}{\llbracket}{\rrbracket}
% tensor powers, tensor algebra

% antisymmetric tensor powers, Grassmann algebra
\newcommand{\Anti}                    {\Lambda}
% symmetric tensor powers, symmetric algebra

% C^k functions (for C^0 use \Stetig)
\newcommand{\Fun}[1][k]      {\mathscr{C}^{#1}}
% smooth functions
\newcommand{\Cinfty}         {\Fun[\infty]}
% generic tensor product over some ring
\newcommand{\tensor}[1][{}]           {\mathbin{\otimes_{\scriptscriptstyle{#1}}}}
% differential
\newcommand{\D}              {\mathop{}\!\mathrm{d}}
% generic argument of a map
\newcommand{\argument}       {\,\cdot\,}
% identity map
\DeclareMathOperator{\id}    {\mathsf{id}}
% generic projection
\newcommand{\pr}             {\mathrm{pr}}
% image of a map

%formal R
\newcommand{\Rf}  {\mathbb{R}^d_{\mathrm{formal}}}
% Left invariant sections

%polydifferential operators
\newcommand{\Dpoly}[1]	{\mathrm{D}_{\mathrm{poly}}^{#1}} 
%polyvector fields
\newcommand{\Tpoly}[1]{\mathrm{T}_{\mathrm{poly}}^{#1}}
%chains
\newcommand{\Cpoly}[1]{\mathrm{C}^{\mathrm{poly}}_{#1}}
%cohomology

%symmetric bundle
\newcommand{\SM}	{\mathcal{S}M}
%Fiberwise polyvector fields
\newcommand{\FibT}[1]{\mathcal{T}_{\mathrm{poly}}^{#1}}
%Fiberwise polydifferential ops
\newcommand{\FibD}[1]{\mathcal{D}_{\mathrm{poly}}^{#1}}
%Fiberwise chains
\newcommand{\FibC}[1]{\mathcal{C}_{\mathrm{poly}}^{#1}}
%forms on Fiberwise E-polyvector fields and E-polydifferential ops
\def\presuper#1#2% 
{\mathop{}%
   \mathopen{\vphantom{#2}}^{#1}%
   \kern-\scriptspace%
   #2}
\newcommand{\FormsT}[3]{\presuper{}{\Omega^{#1}(#2;\FibT{#3})}}
\newcommand{\FormsD}[3]{\presuper{}{\Omega^{#1}(#2;\FibD{#3})}}
\newcommand{\FormsC}[3]{\presuper{}{\Omega^{#1}(#2;\FibC{#3})}}

%Connection on Fiberwise polyvector fields
\newcommand{\conn}{\nabla}%{\nabla\!\!\!\!\nabla}
%Kernel 

%covariant derivative 

%standard notation for numbers
\newcommand{\R}{\mathbb{R}}

%%%%%%%%%%%%%%%%%%%%%%%%%%%%%%%%%%%%%%%%%%%%%%%%%%%%%%%%%%%%%%%%%%%%%%%%%%%%%%%%%

\title{$L_\infty$-resolutions and twisting in the curved context}
%A short note on formality proofs}

\author{
  \textbf{Chiara Esposito}\thanks{\texttt{chiara.esposito@mathematik.uni-wuerzburg.de}},\\
    Institut für Mathematik \\
  Lehrstuhl für Mathematik X \\
  Universität Würzburg \\
  Campus Hubland Nord \\
  Emil-Fischer-Straße 31 \\
  97074 Würzburg \\
  Germany \\
  [0.3cm]
  \textbf{Niek de Kleijn}\thanks{\texttt{niekdekleijn@gmail.com }},\\
  Korteweg-de Vries Institute for Mathematics\\
  University of Amsterdam \\
  Science Park 105-107\\ 
  1098 XG Amsterdam\\
  The Nederlands
    \\[0.3cm]
}

%%%%%%%%%%%%%%%%%%%%%%%%%%%%%%%%%%%%%%%%%%%%%%%%%%%%%%%%%%%%%%%%%%%%%%%%%%%%%%%%%%

%
% the text starts here
%

\begin{document}

\maketitle

\begin{abstract}
  In this short note we describe an alternative global version of the twisting 
  procedure used by Dolgushev to prove formality theorems.  
  This allows us to describe the maps of Fedosov resolutions, which are key factors 
  of the formality morphisms, in terms of a twist of the fiberwise quasi-isomorphisms
  induced by the local formality theorems proved by Kontsevich and Shoikhet. The key point
  consists in considering $L_\infty$-resolutions of the Fedosov resolutions obtained by Dolgushev 
  and an adapted notion of Maurer--Cartan element. This allows us to perform the twisting of the 
  quasi-isomorphism intertwining them in a global manner.
\end{abstract}

%\tableofcontents

\newpage

%
%Intro
%
\section{Introduction}

One of the most important results of deformation quantization is the
so-called formality theorem, due to Kontsevich \cite{Kontsevich2003}, proving the existence 
and classification of formal star products. More precisely,
the formality theorem provides  an
$L_\infty$-quasi-isomorphism from polyvectorfields to polydifferential
operators on Euclidean space. In \cite{Dolgushev2005, Dolgushev2005a}
Dolgushev proves the theorem for general manifolds $M$ by using Fedosov's formal geometric methods  \cite{Fedosov1996},
Kontsevich's quasi-isomorphism
\cite{Kontsevich2003} and the twisting procedure inspired by Quillen
\cite{Quillen69}. These techniques have also been used to prove
formality for Lie algebroids \cite{Calaque2005} and formality for chains
\cite{Dolgushev2006}.

The main goal of this paper is to give an alternative version 
of the twisting procedure used by Dolgushev. 
In \cite{Kontsevich2003} Kontsevich proved that there exists an $L_\infty$-quasi-isomorphism between dgla's
  \begin{equation}
    \mathscr{K}\colon 
    \Tpoly{}(\mathbb{R}^d)
    \longrightarrow 
    \Dpoly{}(\mathbb{R}^d).
  \end{equation} 
For a generic manifold $M$ the quasi-isomorphism $\mathscr{K}$ induces a fiberwise
quasi-isomorphism $\mathscr{U}$ between the Fedosov resolutions of $\Tpoly{}(M)$
and $\Dpoly{}(M)$. Twisting procedures are used by Dolgushev to obtain a formality
quasi-isomorphism by twisting the fiberwise
quasi-isomorphism induced by $\mathscr{U}$.
However, Dolgushev twists locally and checks consistency on overlapping charts, this means that the global quasi-isomorphism 
is not
described, a priori, as a twist of another morphism.  We are  interested in
presenting the quasi-isomorphism of Fedosov resolutions as a twist of the fiberwise map
globally.  This is possible if, first of all, one allows  for curvature in the
definition of an $L_\infty$-algebra and secondly one uses the notion of Maurer--Cartan elements adapted to a resolution of $L_\infty$-modules. This allows us to present the fiberwise
morphism $\mathscr{U}$ as a morphism of curved Lie algebras and
obtain the global map as a twist of $\mathscr{U}$ directly.
In complete analogy, starting with the quasi-isomorphism $\mathscr{S}$ of dgla-modules proved by 
Shoikhet in \cite{Shoikhet03} we are able to obtain the relevant map for chains 
as a twist of $\mathscr{S}$. The generalization to chains doesn't cost anything since we already phrase the case of 
cochains completely in terms of $L_\infty$-modules (and their resolutions as modules). 

In Dolgushev's approach the (global) $L_\infty$-algebra which one can twist is (globally) curved. 
Thus it makes no sense to speak of twisting a quasi-isomorphism. 
Our strategy consists in resolving the Fedosov resolutions obtained by Dolgushev
as $L_\infty$-modules. Then, $\mathscr{U}$ induces an $L_\infty$-morphism
of resolutions and we prove that the element by which one twists satisfies certain conditions. Locally we establish that the 
morphism of curved $L_\infty$-algebras is actually the twist of a quasi-isomorphism of flat $L_\infty$-algebras. Thus the use of a 
resolution allows us to consider whether $L_\infty$-morphisms of curved algebras are ``quasi-isomorphisms'', at least as far as 
twisting is concerned. 

% 
%   This is also the point where our construction of the formality
%   morphism for a general smooth manifold differs from
%   Dolgushev's. Dolgushev proceeds by noting that the restriction of
%   the differential $D$ to any coordinate neighborhood $U$ is given
%   by $D|_U=d+[-A_{-1}|_U+B_U+A|_U,\argument]$. Here $A_{-1}$ and $A$
%   are global sections (as explained in the previous section) and
%   $B_U$ is not a global section in general and determined by
%   $\conn$. The element $\Gamma_U:=-A_{-1}|_U+B_U+A|_U$ is an MC
%   element over any coordinate neighborhood $U$ and thus we may twist
%   the map $\mathscr{U}$ by $\Gamma_U$ as above. Thus we obtain the
%   formality morphisms $\mathscr{U}^{\Gamma_U}$ of $\Dpoly{}(U)$ for
%   all coordinate neighborhoods $U$. At this point Dolgushev uses
%   (iii) of Theorem \ref{formalformality} to show that the local
%   morphisms $\mathscr{U}^{\Gamma_U}$ glue together to form a global
%   morphism $\mathscr{U}^D$. We should note also that, although our
%   construction will be different, the resulting
%   $L_\infty$-quasi-isomorphisms will be the same regardless of the
%   construction. The advantage of our construction will simply be to
%   conclude immediately that the obtained $L_\infty$-quasi-isomorphism
%   is the twist of an $L_\infty$-morphism (of non-flat
%   $L_\infty$-algebras).

\vspace{0.3cm}

The paper is organized as follows. In Section~\ref{sec:preliminaries}
we recall the language of $L_\infty$-algebras and in particular we 
present the notions of $L_\infty$-algebra and $L_\infty$-morphisms in the
presence of curvature. In this setting, we recall the twisting procedure and 
the effects that it has on $L_\infty$-algebras and $L_\infty$-morphisms.
Section~\ref{sec:global} contains the main result of this paper. First, 
we introduce the concept of Maurer--Cartan elements compatible with resolutions
of $L_\infty$-modules and we prove the theorem stated above. As a second step, we
apply this result to prove formality theorems for Hochschild cochains and chains.
% Introducing the notion
% of \emph{globally curved $L_\infty$-algebra} we provide an alternative global approach
% to the twisting procedure that allows us to prove formality theorems as an immediate
% consequence.

\section*{Acknowledgments}
The authors are grateful to Stefan Waldmann 
for inspiring discussions.

%
%Preliminaries
%
\section{Preliminaries}
\label{sec:preliminaries}

%
%L_\infty$-algebras and twisting
%
%\subsection{$L_\infty$-algebras and twisting}

In this section we recall the notions of $L_\infty$-algebras, $L_\infty$-modules, $L_\infty$-morphisms and their twists by Maurer--Cartan elements.  The idea of such
twisting procedures comes from Quillen's seminal work
\cite{Quillen69}. Proofs and details can be found in
\cite{Dolgushev2005,Dolgushev2005a,sexy1}.

Given a graded vector space $V^\bullet$ over $\mathbb{K}$ we shall
define the \emph{shifted} vector space, denoted by $V[k]^\bullet$, by
\begin{equation*}
  V[k]^\ell
  =
  V^{\ell+k}
\end{equation*}
We shall fix a field $\mathbb{K}$ of characteristic $0$.
%\begin{definition}[$L_\infty$-algebra]
Recall that a degree $+1$ coderivation $Q$ on the counital conilpotent
cocommutative coalgebra $S^c(\mathfrak{L})$ cofreely cogenerated by
the graded vector space $\mathfrak{L}[1]^\bullet$ over $\mathbb{K}$ is called
an \emph{$L_\infty$-structure} on the graded vector space
$\mathfrak{L}$ if $Q^2=0$.
%\end{definition}

%\begin{lemma}[Characterization of coderivations]
We recall that $S^c(\mathfrak{L})$ can be realized by the symmetrized
deconcatenation product on the space
$\bigoplus_{n\geq0}\bigvee^n\mathfrak{L}[1]$, see e.g. \cite{sexy1}. Here $\bigvee^n\mathfrak{L}[1]$ is the space of
coinvariants for the usual action of $S_n$ (the symmetric group in $n$
letters) on $\otimes^n\mathfrak{L}[1]$ (keeping the grading in
mind). Any degree $+1$ coderivation $Q$ on $S^c(\mathfrak{L})$ is
uniquely determined by the components
\begin{equation}
  Q_n\colon \bigvee^n(\mathfrak{L}[1])\longrightarrow \mathfrak{L}[2]
\end{equation}
through the formula 
\begin{equation}
  Q(\gamma_1\vee\ldots\vee\gamma_n)
  =
  \sum_{k=0}^n\sum_{\sigma\in\mbox{\tiny Sh($k$,$n-k$)}}
  \epsilon(\sigma)Q_k(\gamma_{\sigma(1)}\vee\ldots\vee
  \gamma_{\sigma(k)})\vee\gamma_{\sigma(k+1)}\vee
  \ldots\vee\gamma_{\sigma(n)},
\end{equation} 
where Sh($k$,$n-k$) denotes the set of $(k, n-k)$ shuffles in $S_n$,
$\epsilon(\sigma)=\epsilon(\sigma,\gamma_1,\ldots,\gamma_n)$ is a sign
given by the rule
\begin{equation}
  \gamma_{\sigma(1)}\vee\ldots\vee\gamma_{\sigma(n)}=
  \epsilon(\sigma)\gamma_1\vee\ldots\vee\gamma_n
\end{equation} 
and we use the conventions that Sh($n$,$0$)=Sh($0$,$n$)$=\{\id\}$ and
that the empty product equals the unit.
Note that $Q_0(1)$ is of degree $1$ in $\mathfrak{L}[1]$ (thus of
degree $2$ in $\mathfrak{L}$).  The $Q^2=0$ condition can now be
expressed in terms of a quadratic equation in the $Q_n$.
\begin{example}[Curved Lie algebra]
\label{ex:curvedlie}
  Our main example of an $L_\infty$-algebra is that of a (curved) Lie
  algebra $(\mathfrak{L},R,\D,[\argument,\argument])$ by setting
  $Q_0(1)=-R$, $Q_1=-\D$,
  $Q_2(\gamma\vee\mu)=-(-1)^{|\gamma|}[\gamma,\mu]$ and $Q_i=0$ for
  all $i\geq 3$. 
\end{example}
In the following we call the $L_\infty$-algebras with $Q_0=0$
\emph{flat} $L_\infty$-algebras.  For our purposes, we need to
consider $L_\infty$-algebras $\mathfrak{L}$ that are equipped with a
decreasing filtration
\begin{equation*}
  \mathfrak{L}=\mathcal{F}^0\mathfrak{L}\supset\mathcal{F}^1\mathfrak{L}\supset\ldots\supset\mathcal{F}^k\mathfrak{L}\supset\ldots,
\end{equation*}
that respects the $L_\infty$-structure and which is moreover
\emph{complete}, i.e.
\begin{equation*}
  \bigcap_k\mathcal{F}^k\mathfrak{L}=\{0\}.
\end{equation*}
This yields an associated complete metric topology and we consider
convergence of infinite sums in terms of this topology. 

\vspace{0.3cm} 

\noindent Suppose
$(\mathcal{L},Q)$ and $(\widetilde{\mathcal{L}},\widetilde{Q})$ are
two $L_\infty$-algebras.  A degree $0$, filtration respecting,
counital coalgebra morphism
\begin{equation*}
  F\colon 
  S^c(\mathfrak{L})
  \longrightarrow 
  S^c(\widetilde{\mathfrak{L}})
\end{equation*}
such that $FQ = \widetilde{Q}F$ is said to be an
\emph{$L_\infty$-morphism}.
%
%\begin{lemma}[Characterization of co-algebra morphisms]
A coalgebra morphism $F$ from $S^c(\mathfrak{L})$ to
$S^c(\widetilde{\mathfrak{L}})$ is uniquely determined by its
components (also called \emph{Taylor coefficients})
\begin{equation*}
  F_n\colon \bigvee^n(\mathfrak{L}[1])\longrightarrow \widetilde{\mathfrak{L}}[1],
\end{equation*}
where $n\geq 1$. Namely, we set $F(1)=1$ and use the formula
\begin{equation*}
  F(\gamma_1\vee\ldots\vee\gamma_n)=
\end{equation*}
\begin{equation}
\label{coalgebramorphism}
  \sum_{p\geq1}\sum_{\substack{k_1,\ldots, k_p\geq1\\k_1+\ldots+k_p=n}}
  \sum_{\sigma\in \mbox{\tiny Sh($k_1$,..., $k_p$)}}\frac{\epsilon(\sigma)}{p!}
  F_{k_1}(\gamma_{\sigma(1)}\vee\ldots\vee\gamma_{\sigma(k_1)})\vee\ldots\vee 
  F_{k_p}(\gamma_{\sigma(n-k_p+1)}\vee\ldots\vee\gamma_{\sigma(n)}),
\end{equation}
where Sh($k_1$,...,$k_p$) denotes the set of $(k_1,\ldots,
k_p)$-shuffles in $S_n$ (again we set Sh($n$)$=\{\id\}$).
Given an $L_\infty$-morphism of \emph{flat} $L_\infty$-algebras
$\mathfrak{L}$ and $\widetilde{\mathfrak{L}}$, we obtain the map of
complexes
\begin{equation*}
  F_1\colon (\mathfrak{L},Q_1)\longrightarrow (\widetilde{\mathfrak{L}},\widetilde{Q}_1).
\end{equation*}
The $L_\infty$-morphism $F$ is called an
\emph{$L_\infty$-quasi-isomorphism} if this map $F_1$ is a
quasi-isomorphism of complexes.

Let $\mathfrak{L}$ be an $L_\infty$-algebra. Then an 
\emph{$L_\infty$-module} over $\mathfrak{L}$ is a 
graded vector space $\mathfrak{M}$ equipped with a
square-zero degree $+1$ coderivation 
$\varphi$ on the cofree 
$S^c(\mathfrak{L})$-comodule $S^c(\mathfrak{L})
\tensor \mathfrak{M}$ cogenerated by $\mathfrak{M}$.
Note that as for a coderivation on 
$S^c(\mathfrak{L})$ a coderivation 
$\varphi$ on $S^c(\mathfrak{L})\otimes\mathfrak{M}$ is given by the components 
\begin{equation}
\varphi_n\colon \bigvee^n\mathfrak{L}[1]\otimes \mathfrak{M}\rightarrow \mathfrak{M}[1]
\end{equation}
through the formula 
\begin{equation}
\begin{gathered}
\varphi(\gamma_1\vee\ldots\vee\gamma_n\otimes m)=
\\
Q(\gamma_1\vee\ldots\vee\gamma_n)\otimes m +
\sum_{k=0}^n\sum_{\sigma\in\mbox{Sh}(k,n-k)}\epsilon'(\sigma)\epsilon(\sigma)
\gamma_{\sigma(1)}\vee\ldots\vee\gamma_{\sigma(k)}
\otimes \varphi_{n-k}(\gamma_{\sigma(k+1)}
\vee\ldots\vee\gamma_{\sigma(n)}\otimes m),
\end{gathered}
\end{equation}
where $\epsilon'(\sigma)=\epsilon'
(\sigma,\gamma_1\vee\ldots\vee\gamma_n)=
(-1)^{\sum_{i=1}^k|\gamma_{\sigma(i)}|}$.
So for instance we find that $\varphi
(\gamma\otimes m)$ is given by $Q(\gamma)\otimes m +
1\otimes \varphi_1(\gamma\otimes m) +
(-1)^{|\gamma|}\gamma\otimes \varphi_0(1\otimes m)$. 
The square-zero condition yields conditions quadratic
in the $\varphi_n$ and $Q_n$, for example
\begin{equation*}
\begin{gathered}
\varphi_0(1\otimes \varphi_0(1\otimes m))+
\varphi_1(Q_0(1)\otimes m)=0\\
\varphi_0(1\otimes \varphi_1(\gamma\otimes m))
+(-1)^{|\gamma|}\varphi_1(\gamma\otimes 
\varphi_0(1\otimes m))+\varphi_1
(Q_1(\gamma)\otimes m)+\varphi_2(Q_0(1)\vee\gamma\otimes m)=0.
\end{gathered}
\end{equation*}
Note that if $Q_0(1)=0$ then by identifying 
$\bigvee^0\mathfrak{L}[1]\otimes\mathfrak{M}$ with $\mathfrak{M}$ we obtain that 
$\varphi_0$ is a differential on $\mathfrak{M}$. 
\begin{example}
  The second most basic example of 
   an $L_\infty$-module
  consists of a dg module $(\mathfrak{M}, \mathrm{b},\rho)$ 
  over a dgla $(\mathfrak{L}, \D, [\argument, \argument])$.
  In this case we have 
  \begin{equation}
  \begin{gathered}
      \varphi_0 (v) = -\mathrm{b}v 
      \\
      \varphi_1(\gamma\vee v) = -(-1)^{|\gamma|}\rho (\gamma)v\quad\text{and}\\
      \varphi_k=0\hspace{0.2cm}\mbox{for all}\hspace{0.2cm}k\geq 2,
      \end{gathered}
  \end{equation}
  for $v \in \mathfrak{M}$ and $\gamma \in \mathfrak{L}$, where $\rho$ is the action
  of $\mathfrak{L}$ on $\mathfrak{M}$.
  %in addition to Example~\ref{ex:curvedlie}. 
\end{example}

\begin{example}[Morphism of $L_\infty$-algebras]
  \label{morphmodule}
  Suppose $F\colon \mathfrak{L}\rightarrow \mathfrak{K}$ is an 
  $L_\infty$-morphism. This induces the structure of $L_\infty$-module over $\mathfrak{L}$ on $\mathfrak{K}$. 
  Namely, we consider the module structure with components 
  $\varphi_k$ given by 
  \begin{equation}
  	\varphi_k(\gamma_1\vee\ldots\vee\gamma_k\otimes m)
    =
    \pr_{\mathfrak{K}}\left(Q_{\mathfrak{K}}(F(\gamma_1\vee\ldots\vee\gamma_k)\vee m\right)
  \end{equation}
\end{example}
Let $\mathfrak{L}$ be an $L_\infty$-algebra and $(\mathfrak{M}, \varphi)$,
$(\widetilde{\mathfrak{M}}, \tilde{\varphi})$ be $L_\infty$-modules over $\mathfrak{L}$.
Then a morphism $F$ from the comodule $S^c(\mathfrak{L})\tensor \mathfrak{M}$ to the
comodule $S^c(\mathfrak{L})\tensor \widetilde{\mathfrak{M}}$ is said to be an \emph{$L_\infty$-morphism}
if it satisfies the condition:
\begin{equation*}
	F\varphi
    =
    \tilde{\varphi}F.
\end{equation*}
As before a degree $0$ comodule morphism $F\colon 
S^c(\mathfrak{L})\otimes \mathfrak{M}\rightarrow S^c(\mathfrak{L})\otimes \widetilde{\mathfrak{M}}$ 
is given by components 
\begin{equation*}
F_n\colon \bigvee^n\mathfrak{L}[1]\otimes\mathfrak{M}\longrightarrow \widetilde{\mathfrak{M}}
\end{equation*}
through the formula 
\begin{equation} 
\begin{gathered}
F(\gamma_1\vee\ldots\vee\gamma_n\otimes m)=\\
\sum_{k=0}^n\sum_{\sigma\in\mbox{Sh}(k,n-k)}\epsilon(\sigma)\gamma_{\sigma(1)}\vee\ldots\vee\gamma_{\sigma(k)}\otimes F_{n-k}(\gamma_{\sigma(k+1)}\vee\ldots\vee\gamma_{\sigma(n)}\otimes m)
\end{gathered}
\end{equation}
In particular, in the case that $\mathfrak{L}$ is flat, $F$ is a \emph{quis of $L_\infty$-modules} if
the zero-th
component $F_0$  is a quis of complexes. 

%\textcolor{red}{maybe put easy cases, dg modules, curved...}
\begin{example}[Morphism of $L_\infty$-algebras]
\label{morphmodulemorph}
  Suppose 
  \begin{equation}
      \begin{tikzcd}%[row sep=3em,column sep=4em]
        \mathfrak{L}\arrow[r, "G"]\arrow[swap,"H",d]& \mathfrak{G}\\
        \mathfrak{H}\arrow[swap,ur,"F"]&\\
      \end{tikzcd}
  \end{equation}
  is a commuting diagram of morphisms of $L_\infty$-algebras.  
  Then we may equip $\mathfrak{H}$ and $\mathfrak{G}$ with the
   $\mathfrak{L}$-module structure as in Example~\ref{morphmodule}
   and we find that the map $\mathcal{F}\colon \mathfrak{G}
   \rightarrow \mathfrak{H}$ given by 
    \begin{equation}\mathcal{F}_n(\gamma_1\vee\ldots\vee\gamma_n\otimes m)=
    \pr_{\mathfrak{G}}(F(H(\gamma_1\vee\ldots\vee\gamma_n)\vee m))\end{equation}
    is a morphism of $L_\infty$-modules. Note in particular that $\mathcal{F}_0(1\otimes m)=F_1(m)$ so, if $\mathfrak{L}$, $\mathfrak{G}$ and $\mathfrak{H}$ are flat, then $\mathcal{F}$ is a quasi-isomorphism if and only if $F$ is a quasi-isomorphism. 
\end{example}
Let $\pi\in\mathcal{F}^1\mathfrak{L}[1]^0$, by 
direct computations we find that the element
\begin{equation*}
    \exp(\pi)
    :=
    \sum_{n=0}^\infty\frac{\pi^k}{k!}\in S^c(\mathfrak{L})
\end{equation*}
is well-defined, invertible and group-like. As a consequence one can prove 
the following claim.
\begin{lemma}
	\label{exppi}
    The pair $\mathfrak{L}^\pi = (\mathfrak{L}, Q^\pi)$ with $
    Q^\pi(a):=\exp(-\pi)\vee Q(\exp(\pi)\vee a) $ is still an
    $L_\infty$-algebra.
\end{lemma}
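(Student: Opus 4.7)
The plan is to recognize $Q^\pi$ as the conjugation of $Q$ by a coalgebra automorphism, and then reduce the statement to two standard facts from the theory of cocommutative coalgebras: that conjugation preserves the coderivation property, and that squaring commutes with conjugation.

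First I would introduce the map $\Phi\colon S^c(\mathfrak{L})\to S^c(\mathfrak{L})$ given by $\Phi(a)=\exp(\pi)\vee a$, so that by definition $Q^\pi = \Phi^{-1}\circ Q\circ\Phi$. The filtration hypothesis $\pi\in\mathcal{F}^1\mathfrak{L}[1]^0$ guarantees that $\pi^k/k!$ lies in the $k$-th filtration piece, so $\exp(\pi)$ is a well-defined element of degree $0$ in the completion, and the same holds for $\exp(-\pi)$. The symmetric product gives $\exp(-\pi)\vee\exp(\pi)=\exp(0)=1$, so $\Phi$ is invertible with inverse $a\mapsto\exp(-\pi)\vee a$. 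Moreover $\Phi$ clearly respects the decreasing filtration on $S^c(\mathfrak{L})$, so the compositions defining $Q^\pi$ all make sense in the completed coalgebra.

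Next I would verify that $\Phi$ is a (counital) coalgebra morphism. The key input is that $\exp(\pi)$ is \emph{group-like} in $S^c(\mathfrak{L})$ viewed as a commutative bialgebra with the shuffle/symmetric product $\vee$: since $\pi$ is primitive, $\Delta(\pi)=\pi\otimes 1+1\otimes\pi$, and the two summands commute (in the graded sense, as $|\pi|=0$ in $\mathfrak{L}[1]$), so $\Delta(\exp(\pi))=\exp(\pi)\otimes\exp(\pi)$. A routine computation then shows that for any $a\in S^c(\mathfrak{L})$ one has
\begin{equation*}
\Delta(\exp(\pi)\vee a)=(\exp(\pi)\otimes\exp(\pi))\vee\Delta(a)=(\Phi\otimes\Phi)\Delta(a),
\end{equation*}
using that $\vee$ is a coalgebra morphism (the bialgebra axiom). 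Hence $\Phi$ is a coalgebra automorphism, and so is $\Phi^{-1}$.

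With these pieces in place the lemma follows almost formally. The composition $Q^\pi=\Phi^{-1}\circ Q\circ\Phi$ of a coderivation and a coalgebra automorphism on either side is again a coderivation of the same degree $+1$, since for any coalgebra map $\Phi$ and coderivation $Q$ the identity $\Delta\circ Q^\pi = ((Q^\pi\otimes\id)+(\id\otimes Q^\pi))\circ\Delta$ may be checked by inserting $\Phi\Phi^{-1}=\id$ at the appropriate places and using $\Delta\circ\Phi=(\Phi\otimes\Phi)\circ\Delta$. Finally,
\begin{equation*}
(Q^\pi)^2=\Phi^{-1}Q\Phi\Phi^{-1}Q\Phi=\Phi^{-1}Q^2\Phi=0,
\end{equation*}
so $\mathfrak{L}^\pi=(\mathfrak{L},Q^\pi)$ is an $L_\infty$-algebra. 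The only step requiring genuine care is the convergence/well-definedness of $\Phi$ and $\Phi^{-1}$ together with the verification that they respect the filtration, which I would treat as the main (though routine) obstacle; the algebraic heart of the argument is the group-likeness of $\exp(\pi)$, from which everything else is formal.
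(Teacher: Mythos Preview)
Your proposal is correct and follows exactly the approach the paper sketches: the paper merely notes that $\exp(\pi)$ is well-defined, invertible and group-like, and states that the lemma follows ``as a consequence'' (deferring details to the references). Your argument is precisely the standard way to unpack that consequence---conjugation by the coalgebra automorphism $a\mapsto\exp(\pi)\vee a$ preserves both the coderivation property and the square-zero condition---so there is nothing to add.
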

\begin{example}[Twisted curved Lie algebra]
  In the case of a curved Lie algebra $(\mathfrak{L}, R,\D,
  [\argument,\argument])$ we find the twisted curved Lie algebra
  $(\mathfrak{L}, R^\pi, \D + [\pi,\argument],[\argument,\argument])$,
  where
  \begin{equation*}
    R^\pi
    :=
    R+\D\pi + \frac{1}{2}[\pi,\pi].
  \end{equation*}
\end{example}

Given an $L_\infty$-algebra $(\mathfrak{L}, Q)$, an element $\pi\in
\mathcal{F}^1\mathfrak{L}[1]^0$ is called a \emph{Maurer-Cartan (MC) element} if it satisfies the following equation
\begin{equation}
 \label{MCdef}
  \sum_{n=0}^\infty \frac{Q_n(\pi^n)}{n!}=0.
\end{equation}
Note that this is equivalent to $Q^\pi_0=0$, and it is equivalent to $Q(\exp(\pi))=0$. 
For a dgla $(\mathfrak{L}, \D,[\argument,\argument])$ the definition
above boils down to the usual Maurer--Cartan equation.
If we have similarly a curved Lie algebra with curvature $-R$ it comes
down to the non-homogeneous equation
\begin{equation*}
  \D\pi+\frac{1}{2}[\pi,\pi]
  =
  R.
\end{equation*}
\begin{lemma}
 \label{lem:twistprop}
 Given an L$_\infty$-morphism $F$ from $\mathfrak{L}$ 
 to $\widetilde{\mathfrak{L}}$ and an element
  $\pi\in\mathcal{F}^1\mathfrak{L}[1]^0$, we define the $F$-associated element $\pi_F\in\mathcal{F}^1
  \widetilde{\mathfrak{L}}[1]^0$ by the formula
  \begin{equation*}
    \pi_F:=\sum_{n=1}^\infty\frac{F_n(\pi^n)}{n!}.
  \end{equation*}
  Then:
  \begin{lemmalist}
  \item\label{lem:twistprop!1}  we have
    \begin{equation*}
      F(\exp(\pi))=\exp(\pi_F);
    \end{equation*}
  \item\label{lem:twistprop!2} if $\pi$ is an MC element, then $\pi_F$ is also an MC element;
  \item\label{lem:twistprop!3}   the $\pi$-twist $F^\pi\colon\mathfrak{L}
  ^\pi\rightarrow \widetilde{\mathfrak{L}}^{\pi_F}$
  of $F$ defined by
    \begin{equation*}
       F^\pi(a)
      :=
      \exp(-\pi_F)\vee F(\exp(\pi)\vee a)
    \end{equation*}
    is an $L_\infty$-morphism;
  \item\label{lem:twistprop!4} if $F$ is an $L_\infty$-morphism such that
    the induced morphisms
    \begin{equation*}
      F|_{\mathcal{F}^k\mathfrak{L}}
      \colon 
      \mathcal{F}^k\mathfrak{L}\longrightarrow \mathcal{F}^k\widetilde{\mathfrak{L}}
    \end{equation*}
    are $L_\infty$-quasi-isomorphisms for all $k$ and
    $\pi\in\mathcal{F}^1\mathfrak{L}[1]^0$ is an MC 
    element, then the $\pi$-twist $F^\pi$ of $F$ is 
    also a quasi-isomorphism.
  %For the proof of we refer to \cite[Prop. 1]{Dolgushev2005}.
  \end{lemmalist}
\end{lemma}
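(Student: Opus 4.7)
The plan builds parts \refitem{lem:twistprop!1}--\refitem{lem:twistprop!4} on the observation that $\exp(\pi)$ is a group-like element of $S^c(\mathfrak{L})$ in the sense that $\Delta(\exp(\pi))=\exp(\pi)\otimes\exp(\pi)$; this is valid because $\pi$ has shifted degree zero, so $\pi\otimes 1$ and $1\otimes\pi$ commute in the shuffle product. Since $F$ is a counital coalgebra morphism, $F(\exp(\pi))$ is again group-like and has counit $1$. In the cofreely cogenerated coalgebra $S^c(\widetilde{\mathfrak{L}})$ such elements are precisely exponentials of elements of $\widetilde{\mathfrak{L}}[1]^0$, and projecting $F(\exp(\pi))$ onto the cogenerating summand recovers exactly $\pi_F=\sum_{n\geq1}F_n(\pi^n)/n!$. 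This proves \refitem{lem:twistprop!1}. (Alternatively one expands both sides using \eqref{coalgebramorphism}, notices that all shuffle signs on $\pi^n$ are trivial, and reorganises the sum over compositions into $\sum_{p\geq 0}\pi_F^p/p!$.)

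Part \refitem{lem:twistprop!2} is immediate: the MC equation \eqref{MCdef} is equivalent to $Q(\exp(\pi))=0$, and then $\widetilde{Q}(\exp(\pi_F))=\widetilde{Q}F(\exp(\pi))=F(Q(\exp(\pi)))=0$ using \refitem{lem:twistprop!1} and the intertwining $FQ=\widetilde{Q}F$. For \refitem{lem:twistprop!3}, note that for any group-like $g$ the left-multiplication $a\mapsto g\vee a$ is a counital coalgebra morphism, hence $F^\pi=L_{\exp(-\pi_F)}\circ F\circ L_{\exp(\pi)}$ is a composition of filtered counital coalgebra morphisms. The intertwining with the twisted codifferentials follows by direct computation, using $\exp(\pi_F)\vee\exp(-\pi_F)=1$ and $FQ=\widetilde{Q}F$:
\begin{align*}
\widetilde{Q}^{\pi_F}F^\pi(a)
&=\exp(-\pi_F)\vee\widetilde{Q}\bigl(\exp(\pi_F)\vee\exp(-\pi_F)\vee F(\exp(\pi)\vee a)\bigr)\\
&=\exp(-\pi_F)\vee F\bigl(Q(\exp(\pi)\vee a)\bigr)\\
&=F^\pi\bigl(\exp(-\pi)\vee Q(\exp(\pi)\vee a)\bigr)
=F^\pi(Q^\pi(a)).
\end{align*}

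The main obstacle is \refitem{lem:twistprop!4}. Both $\mathfrak{L}^\pi$ and $\widetilde{\mathfrak{L}}^{\pi_F}$ are flat by \refitem{lem:twistprop!2}, so it suffices to show that the linear Taylor coefficient $F^\pi_1$ is a quasi-isomorphism of complexes. Projecting the definition of the twist onto the cogenerating summand gives
\begin{equation*}
F^\pi_1(a)=F_1(a)+\sum_{n\geq 1}\frac{1}{n!}F_{n+1}(\pi^n\vee a),\qquad Q^\pi_1(a)=Q_1(a)+\sum_{n\geq 1}\frac{1}{n!}Q_{n+1}(\pi^n\vee a),
\end{equation*}
and because $\pi\in\mathcal{F}^1$ and the filtration respects both the $L_\infty$-structure and the morphism, every correction term strictly raises filtration degree. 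Consequently, on each graded piece $\mathcal{F}^k\mathfrak{L}/\mathcal{F}^{k+1}\mathfrak{L}$ the twisted differential and twisted Taylor coefficient reduce to $Q_1$ and $F_1$. The hypothesis that $F|_{\mathcal{F}^k\mathfrak{L}}$ is a quasi-isomorphism for every $k$, combined with the five lemma applied to $0\to\mathcal{F}^{k+1}\mathfrak{L}\to\mathcal{F}^k\mathfrak{L}\to\mathcal{F}^k\mathfrak{L}/\mathcal{F}^{k+1}\mathfrak{L}\to 0$, yields that $F_1$ is a quasi-isomorphism on every graded piece. A standard convergence theorem for the spectral sequence of a complete decreasing filtration then promotes this to a quasi-isomorphism of the full complexes; checking the convergence hypotheses is the delicate point and is where the completeness assumption on $\mathfrak{L}$ and $\widetilde{\mathfrak{L}}$ is indispensable.
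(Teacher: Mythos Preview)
Your proof is correct and aligns with the paper's own treatment: the paper simply declares that \refitem{lem:twistprop!1}--\refitem{lem:twistprop!3} follow from ``simple computations'' and refers to \cite[Prop.~1]{Dolgushev2005} for \refitem{lem:twistprop!4}, and your group-like argument for \refitem{lem:twistprop!1}--\refitem{lem:twistprop!3} together with the filtration/spectral-sequence comparison for \refitem{lem:twistprop!4} are exactly what those deferred computations amount to.
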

Here \refitem{lem:twistprop!1}-\refitem{lem:twistprop!3} are proved through simple computations 
and for \refitem{lem:twistprop!4} we refer to \cite[Prop. 1]{Dolgushev2005}.
\begin{remark}
\label{rem:comptwist}
  Note that, given $L_\infty$-morphisms $F$ and $G$ from
  $\mathfrak{L}$ to $\mathfrak{L}'$ and $\mathfrak{L}'$ to
  $\mathfrak{L}''$ respectively and the elements $\pi, B\in
  \mathcal{F}^1\mathfrak{L}[1]^0$, we have that
  \begin{equation*} 
    (Q^{\pi})^B=Q^{\pi+B}=(Q^B)^\pi,\hspace{0.3cm} (F^\pi)^B=F^{\pi+B}=(F^B)^\pi,
  \end{equation*}
  \begin{equation*}
    \pi_F+B_{F^\pi}=(\pi+B)_F=B_F+\pi_{F^B}\hspace{0.3cm}\mbox{and}\hspace{0.3cm}(\pi_F)_G=\pi_{G\circ F}.
  \end{equation*}
\end{remark}
The results recalled in Lemma~\ref{lem:twistprop} can be also obtained 
in the setting of $L_\infty$-modules. More precisely, 
let $(\mathfrak{M},\varphi)$ be an $L_\infty$-module 
over the $L_\infty$-algebra  $(\mathfrak{L},Q)$. 
The same graded (filtered) vector space $S^c(\mathfrak{L})
\otimes \mathfrak{M}$ that forms the cofree comodule cogenerated by $\mathfrak{M}$ 
is simultaneously the free module generated by 
$\mathfrak{M}$. Thus, since $S^c(\mathfrak{L})$ is 
commutative, every element $a\in S^c(\mathfrak{L})$
defines a module morphisms of $S^c(\mathfrak{L})
\otimes \mathfrak{M}$ that we will denote by
concatenation. Then it is easy to see that given 
$\pi\in\mathcal{F}^1\mathfrak{L}[1]^0$ we obtain the 
twisted $L_\infty$-module $(\mathfrak{M}^
\pi,\varphi^\pi)$ over the twisted $L_\infty$-algebra
$\mathfrak{L}^\pi$ by setting $\mathfrak{M}^
\pi:=\mathfrak{M}$ and 
\begin{equation}
\label{eq:varphipi}
	\varphi^\pi(X)
    =
    e^{-\pi}\varphi(e^\pi X).
\end{equation}
Similarly if $F\colon (\mathfrak{M}, \varphi)
\rightarrow(\widetilde{\mathfrak{M}},
\tilde{\varphi})$ is an $L_\infty$-morphism we obtain
the $L_\infty$-morphism $F^\pi\colon \mathfrak{M}^
\pi\rightarrow \widetilde{\mathfrak{M}}^\pi$ given by
\begin{equation}
	F^\pi(X)
    =
    e^{-\pi}F(e^\pi X).
\end{equation}
As before we find that if $F$ is an $L_\infty$-quis (respecting filtrations) then 
$F^\pi$ is also an $L_\infty$-quis.
\begin{example}[Morphism of $L_\infty$-algebras]\label{morphmoduletwist} 
Consider Example \ref{morphmodule}, in this case the 
above twisting leads a priori to two different modules.
Namely we can either twist the module $\varphi$ 
obtained from the $L_\infty$algebra morphism $F$ or we 
can first twist $F$ to obtain a module structure. A 
straightforward check shows that these two modules 
coincide. 

\vspace{0.3cm} 

Similarly we may consider Example~\ref{morphmodulemorph}. In this case we can either twist the morphism $\mathcal{F}$ or 
note that $F^{\pi_H}\circ H^\pi=(F\circ H)^\pi=G^\pi$ to obtain the morphism induced by $F^{\pi_H}$ directly. By the 
previous paragraph these are two morphisms between the same modules. In fact they coincide.  
\end{example}

%%%%%%%%%%%%%%%%%%%%%%%%%%%%%%%%%%%%%%%%%%%%%%%%%%%%%%%%%%%%%%%%%%%%%%%%%%%%%%%%%%%%%%%%%%%%%%%%%%%%%%%%%%%%%%%%%%%%%%%%%%
%%%%%%%%%%%%%%%%%%%%%%%%%%%%%%%%%%%%%%%%%%%%%%%%%%%%%%%%%%%%%%%%%%%%%%%%%%%%%%%%%%%%%%%%%%%%%%%%%%%%%%%%%%%%%%%%%%%%%%%%%%
%%%%%%%%%%%%%%%%%%%%%%%%%%%%%%%%%%%%%%%%%%%%%%%%%%%%%%%%%%%%%%%%%%%%%%%%%%%%%%%%%%%%%%%%%%%%%%%%%%%%%%%%%%%%%%%%%%%%%%%%%%

\section{A global approach to twisting procedure}
\label{sec:global}

In \cite{Dolgushev2005,Dolgushev2006} Dolgushev uses the twisting method to obtain a certain quasi-isomorphism. This 
method proceeds roughly as follows. One starts with two flat $L_\infty$-algebras $\mathfrak{K}$ and $\mathfrak{L}$ and one 
wants to find an $L_\infty$-quasi-isomorphism $F$ between them. It is often too hard to construct such a morphism 
directly, but one can make use of the twisting procedure of $L_\infty$-algebras.

Let us recall the result of this procedure in order to make this point explicit. The result of the twisting procedure is 
that the elements $\pi\in \mathcal{F}^1\mathfrak{L}[1]^0$ parametrize a family of $L_\infty$-algebras $\mathfrak{L}^\pi$ 
and the subset of Maurer--Cartan elements $\mathrm{MC}(\mathfrak{L})$ parametrize a subfamily characterized by flatness.
The same is of course true for $\mathfrak{K}$. Given a morphism $G\colon\mathfrak{L}^\pi\longrightarrow 
\mathfrak{K}^{\pi'}$ from an algebra in one family to an algebra in the other we obtain morphisms from each algebra in 
family parametrized by $\mathcal{F}^1\mathcal{L}[1]^0$ to an algebra in the family parametrized by $\mathcal{F}^1
\mathfrak{K}[1]^0$. Thus each such morphism also induces a family of morphisms parametrized by $\mathcal{F}^1\mathfrak{L}
[1]^0$. Moreover, if we start with $\pi\in \mathrm{MC}(\mathfrak{L})$ and $G$ is a quasi-isomorphism (respecting 
filtrations) then the subfamily parametrized by $\mathrm{MC}(\mathfrak{L})$ consists of quasi-isomorphisms. Note that this 
is also of significance to the rest of the morphisms in the family; in a sense these are ``quasi-isomorphisms'' of curved
$L_\infty$-algebras. 

The idea used by Dolgushev, when $\mathfrak{L}$ and $\mathfrak{K}$ are given by the Fedosov resolutions of polyvector 
fields and polydifferential operators respectively (see section \ref{sec:applications}), is to look for a quasi-
isomorphism in the family parametrized by $\mathcal{F}^1\mathfrak{L}[1]$ that gets twisted into a map from $\mathfrak{L}$ 
to $\mathfrak{K}$, thus showing that these algebras are quasi-isomorphic. Such a quasi-isomorphism is made readily 
available in his case by considering  Kontsevich's map from Theorem~\ref{formalformality} applied fiberwise. A problem 
that arises is that the $L_\infty$-algebras between which Kontsevich's map operates are not flat. 

Dolgushev resolves this issue by first working locally and showing that these local solutions glue appropriately. This is 
not preferable since the resulting quasi-isomorphism is not explicitly realized as a twist. In this section we construct 
the tools needed to perform the twisting in an explicitly global manner and 
apply the method to establish formality of polydifferential operators on an arbitrary manifold given the formal 
formality of Theorem~\ref{formalformality}. 
The basic idea is to replace the ``glueing'' argument of Dolgushev by a resolution of $L_\infty$-modules 
given by a cover. We thus show that the fiberwise application of Kontsevich's map actually yields one of the ``quasi-
isomorphisms" of curved $L_\infty$-algebras mentioned above. This method has the added benefit of working mutatis mutandis 
for the case of 
chains, which we will also exemplify. 

\vspace{0.3cm}
\noindent
Suppose $\mathfrak{L}$ is an $L_\infty$-algebra and $\mathfrak{M}$ is an 
$L_\infty$-module over $\mathfrak{L}$. Consider a resolution 
\begin{equation}
  0
  \rightarrow
  \left(\mathfrak{M}, \varphi\right)
  \stackrel{F}{\longrightarrow} 
  \left(\mathfrak{M}^0, \varphi^0\right)
  \stackrel{\partial^0}{\longrightarrow}
  \left(\mathfrak{M}^1,\varphi^1\right)
  \stackrel{\partial^1}{\longrightarrow}\ldots 
\end{equation}
of $\mathfrak{M}$, which we denote by $F\colon\mathfrak{M}\rightarrow 
\left(\mathfrak{M}^\bullet,\partial^\bullet\right)$ or simply $F$. 
Note that this means that the graded vector spaces $\mathfrak{M}^i$ are all $L_\infty$-modules 
over $\mathfrak{L}$ and the maps $\partial^i$ are $L_\infty$-morphisms. 
\begin{definition}[Resolution adapted MC elements]
  The resolution adapted MC elements are those MC elements $\pi$ of $\mathfrak{L}$ that have the 
  property that the induced complex 
  \begin{equation}
    0
    \rightarrow 
    \mathrm{H} \left(\mathfrak{M},\varphi^\pi_0\right)
    \stackrel{F^\pi_0}{\longrightarrow}
    \mathrm{H}\left(\mathfrak{M}^0,(\varphi^0)^\pi_0\right)\stackrel{\partial^\pi_0}{\longrightarrow}
    \mathrm{H}\left(\mathfrak{M}^1,(\varphi^1)^\pi_0\right)\stackrel{\partial^\pi_1}{\longrightarrow}\ldots
  \end{equation}
  is acyclic. The set of resolution adapted MC elements is denoted by $\mathrm{MC}\left(F\right)$.
\end{definition}
\begin{definition}[Morphisms of Resolutions]
  Given resolutions $F\colon \mathfrak{M}\rightarrow\mathfrak{M}^\bullet$ and 
  $G\colon\mathfrak{N}\rightarrow\mathfrak{N}^\bullet$ of $L_\infty$-modules over 
  $\mathfrak{L}$, a series of $L_\infty$-morphisms $\mathscr{U}\colon \mathfrak{M}\rightarrow\mathfrak{N}$ 
  and $\mathscr{U}^{\bullet}\colon\mathfrak{M}^\bullet\rightarrow\mathfrak{N}^\bullet$ is an 
  $L_\infty$-morphism from $F$ to $G$ if the following diagram 
  \begin{equation}
      \begin{tikzcd}%[row sep=3em,column sep=4em]
        0\arrow[r]& \mathfrak{M}
        \arrow[r, "F"]\arrow[d, swap, "\mathscr{U}"] 
        & \mathfrak{M}^0
        \arrow[r, "\partial_F^0"]\arrow[d, swap, "\mathscr{U}^0"] 
        & \mathfrak{M}^1
        \arrow[r, "\partial_F^1"]\arrow[d,  "\mathscr{U}^1"]
        & \ldots 
        \\
        0\arrow[r]& \mathfrak{N} \arrow[r, "G"]
        & \mathfrak{N}^0 \arrow[r,"\partial^0_G"]
        & \mathfrak{N}^1 \arrow[r,"\partial_G^1"]
        & \ldots
      \end{tikzcd}
  \end{equation}
  commutes.
\end{definition}
%
% In order to show that we can obtain an $L_\infty$-quasi-isomorphism 
% of Fedosov resolutions by twisting the map given 
% by fiberwise application of Kontsevich' quasi-isomorphism, 
% we will resolve the resolutions by \v{C}ech complexes and show
% that there exists a resolution adapted Maurer-Cartan element
% such that the twists of the corresponding maps $\mathscr{U}^\bullet$ are quasi-isomorphisms. 
%
\begin{proposition}
\label{prop:key}
  Suppose $\mathscr{U}\colon F\rightarrow G$ is an $L_\infty$-morphism of resolutions from a 
  resolution of the $\mathfrak{L}$-module $\mathfrak{M}$ to a resolution of the 
  $\mathfrak{L}$-module $\mathfrak{N}$. Suppose further that $\pi\in \mathrm{MC}(F)\cap \mathrm{MC}(G)$ 
  and $(\mathscr{U}^n)^\pi$ is a quasi-isomorphism for all $n\geq 0$. 
  Then $\mathscr{U}^\pi$ is an $L_\infty$-quasi-isomorphism. 
\end{proposition}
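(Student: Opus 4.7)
My plan is to twist the entire morphism-of-resolutions diagram by $\pi$ and then conclude with a short diagram chase on the resulting cohomology ladder.

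First, I would note that since $\pi$ is in particular an MC element of $\mathfrak{L}$, the twisted algebra $\mathfrak{L}^\pi$ is flat, so the components $\varphi_0^\pi$ and $(\varphi^n)_0^\pi$ are honest differentials on $\mathfrak{M}$ and on each $\mathfrak{M}^n$, and similarly for $\mathfrak{N}$ and the $\mathfrak{N}^n$. In particular the cohomology groups appearing in the definition of $\mathrm{MC}(F)$ and $\mathrm{MC}(G)$ are well-defined, and the hypothesis $\pi\in\mathrm{MC}(F)\cap\mathrm{MC}(G)$ supplies two genuinely acyclic resolutions on cohomology, one for $\mathrm{H}(\mathfrak{M},\varphi_0^\pi)$ and one for $\mathrm{H}(\mathfrak{N},(\varphi')_0^\pi)$.

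Next, I would apply the $\pi$-twisting uniformly to every arrow of the morphism of resolutions. The $L_\infty$-module analogue of Lemma~\ref{lem:twistprop} and Remark~\ref{rem:comptwist}, discussed after Example~\ref{morphmoduletwist}, guarantees that twisting preserves composition of $L_\infty$-module morphisms; in particular the identities $\mathscr{U}^\bullet\circ F=G\circ\mathscr{U}$ and $\partial_G^n\circ\mathscr{U}^n=\mathscr{U}^{n+1}\circ\partial_F^n$ pass to their $\pi$-twists. Hence $\mathscr{U}^\pi$ together with the $(\mathscr{U}^n)^\pi$ assemble into a morphism of resolutions from $F^\pi$ to $G^\pi$ over the flat $L_\infty$-algebra $\mathfrak{L}^\pi$. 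Taking zeroth Taylor components (which respect composition when the base is flat) and then passing to cohomology produces a commutative ladder whose rows are the two acyclic complexes above, and whose vertical maps from the second column onward are the $\mathrm{H}((\mathscr{U}^n)_0^\pi)$, all isomorphisms by hypothesis.

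I would then conclude by a short chase. Exactness of the top row at $\mathrm{H}(\mathfrak{M})$ and $\mathrm{H}(\mathfrak{M}^0)$ identifies $\mathrm{H}(\mathfrak{M})$ with $\ker \mathrm{H}(\partial_0^\pi)\subset\mathrm{H}(\mathfrak{M}^0)$ via the injection $\mathrm{H}(F_0^\pi)$, and analogously for $\mathfrak{N}$. Commutativity of the square involving $\mathrm{H}((\mathscr{U}^0)_0^\pi)$ and $\mathrm{H}((\mathscr{U}^1)_0^\pi)$ restricts the isomorphism $\mathrm{H}((\mathscr{U}^0)_0^\pi)$ to an isomorphism of these two kernels, which under the identifications just made is precisely $\mathrm{H}(\mathscr{U}_0^\pi)$. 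This is the four-term version of the five-lemma, and because $\mathfrak{L}^\pi$ is flat it is exactly the statement that $\mathscr{U}^\pi$ is an $L_\infty$-quasi-isomorphism of modules.

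The main obstacle is not conceptual but organizational: rigorously verifying that $\pi$-twisting commutes with every composition appearing in the morphism of resolutions, so that the twisted diagram is still commutative. This reduces to the module-level identity $(F\circ G)^\pi=F^\pi\circ G^\pi$, the analogue of $(\pi_F)_G=\pi_{G\circ F}$ from Remark~\ref{rem:comptwist}. Once this functoriality is in place, the remainder of the argument is routine.
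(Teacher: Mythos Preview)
Your proposal is correct and follows essentially the same approach as the paper: twist the entire morphism-of-resolutions diagram by $\pi$, pass to cohomology to obtain a commutative ladder with exact rows, and conclude that $\mathrm{H}(\mathscr{U}_0^\pi)$ is an isomorphism from the fact that all other vertical arrows are isomorphisms. Your write-up is in fact more detailed than the paper's, which simply asserts the conclusion from the ladder diagram; you spell out the functoriality of twisting with respect to composition and the four-term diagram chase explicitly.
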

\begin{proof} 
	Since $\pi$ is adapted to both $F$ and $G$ we find that taking the cohomology with 
    respect to the differentials on $(\mathfrak{M}^\bullet)^\pi$ and $(\mathfrak{N}^\bullet)^\pi$ 
    yields the commutative diagram 
    \begin{equation}
      \begin{tikzcd}%[row sep=3em,column sep=4em]
      0\arrow[r]& \mathrm{H}(\mathfrak{M})
        \arrow[r, "\mathrm{H}F^\pi_0"]\arrow[d, swap, "\mathrm{H}\mathscr{U}^\pi_0"] 
        & \mathrm{H}(\mathfrak{M}^0)
        \arrow[r, "\mathrm{H}(\partial_F^0)^\pi_0"]\arrow[d, swap, "\mathrm{H}(\mathscr{U}^0)^\pi_0"] 
        & \mathrm{H}(\mathfrak{M}^1)
        \arrow[r, "\mathrm{H}(\partial_F^1)^\pi_0"]\arrow[d,  "\mathrm{H}(\mathscr{U}^1)^\pi_0"]
        & \ldots 
        \\
        0\arrow[r]& \mathrm{H}(\mathfrak{N}) \arrow[r, "\mathrm{H}G^\pi_0"]
        & \mathrm{H}(\mathfrak{N}^0) \arrow[r,"\mathrm{H}(\partial^0_G)^\pi_0"]
        & \mathrm{H}(\mathfrak{N}^1) \arrow[r,"\mathrm{H}(\partial_G^1)^\pi_0"]
        & \ldots
      \end{tikzcd}
  \end{equation}
  with exact rows and such that the the vertical arrows to 
  right of $\mathrm{H}\mathscr{U}^\pi_0$ are all isomorphisms. 
  Thus $\mathrm{H}\mathscr{U}_0^\pi$ is also an isomorphism and 
  $\mathscr{U}^\pi$ is a quasi-isomorphism. 
\end{proof}
%

%%%%%%%%%%%%%%%%%%%%%%%%%%%%%%%%%%%%%%%%%%%%%%%%%%%%%%%%%%%%%%%%%%%%%%%%%%%%%%%%%%%%%%%%%%%%%%%%%%%%%%%%%%%%%%%%%%%%%%%%%%%%%%%%%%%%%%
%%%%%%%%%%%%%%%%%%%%%%%%%%%%%%%%%%%%%%%%%%%%%%%%%%%%%%%%%%%%%%%%%%%%%%%%%%%%%%%%%%%%%%%%%%%%%%%%%%%%%%%%%%%%%%%%%%%%%%%%%%%%%%%%%%%%%%

\subsection{Applications: formality theorems}
\label{sec:applications}

In the following section we apply the above result to obtain a proof of 
formality of Hochschild cochains and chains. The main point consists in showing 
that the Fedosov resolutions obtained by Dolgushev in \cite{Dolgushev2005a,Dolgushev2006}
form resolutions of $L_\infty$-modules. Thus, formality maps can be obtained as a twisted
morphism of resolutions via Prop.~\ref{prop:key}.

%%%%%%%%%%%%%%%%%%%%%%%%%%%%%%%%%%%%%%%%%%%%%%%%%%%%%%%%%%%%%%%%%%%%%%%%%%%%%%%%%%%%%%%%%%%%%%%%%%%%%%%%%%%%%%%%%%%%%%%%%%%%%%%%%%%%
\subsubsection{Formality for Hochshild cochains}

As a first step we here need to recall the resolutions obtained in \cite{Dolgushev2005,Dolgushev2005a}
of the dgla's of poly-vector fields and poly-differential operators on a
generic manifold. A more detailed discussion can also be found in \cite{sexy1}.

Let us denote the formal neighborhood at $0\in \R^d$ by $\Rf$. The
smooth functions $\Cinfty(\Rf)$ on $\Rf$ are given by the algebra
\begin{equation*}
  \Cinfty(\Rf)
  :=
  \varprojlim_{k\rightarrow \infty} \Cinfty(\R^d)/\mathcal{I}_0^k,
\end{equation*} 
where $\mathcal{I}_0$ denotes the ideal of functions vanishing at
$0\in\R^d$.  Note that $\Cinfty(\Rf)$ comes equipped with the complete
decreasing filtration
\begin{equation*}
   \Cinfty(\Rf)\supset\mathcal{I}_0\supset\mathcal{I}_0^2\supset\ldots
 \end{equation*}
and corresponding (metric) topology.  The Lie algebra of continuous
derivations of $\Cinfty(\Rf)$ is denoted by $\Tpoly{0}(\Rf)$. Setting
$\Tpoly{-1} := \Cinfty(\Rf)$ we obtain the Lie--Rinehart pair
$(\Tpoly{-1},\Tpoly{0})$ and the graded vector space
\begin{equation*}
  \Tpoly{} (\Rf)
  :=
  \bigoplus_{k\geq -1}\Tpoly{k} (\Rf),
\end{equation*}
where $\Tpoly{k} (\Rf) := \Anti^{k+1}\Tpoly{0} (\Rf)$ for $k\geq 0$.
Here the tensor product is understood to be over $\Tpoly{-1}(\Rf)$ and
completed.
The Lie bracket $\Schouten{\argument,\argument}$ on $\Tpoly{0}(\Rf)$
extends to a graded Lie algebra structure on $\Tpoly{}(\Rf)$.

The universal enveloping algebra of the Lie-Rinehart pair
$(\Tpoly{-1}(\Rf),\Tpoly{0}(\Rf))$ is denoted by
$\Dpoly{0}(\Rf)$. 
%Recall that $\Dpoly{0}(\Rf)$ is naturally equipped
%with the structures of a bialgebra (see e.g. \cite{Moerdijk2010}).
%
We extend the algebra structure in the obvious (componentwise) way to
\begin{equation*}
  \Dpoly{}(\Rf)
  :=
  \bigoplus_{k\geq -1}\Dpoly{k}(\Rf),
\end{equation*} 
where $\Dpoly{-1} (\Rf):= \Tpoly{-1}(\Rf)$ and $\Dpoly{k} (\Rf) :=
\left(\Dpoly{0}(\Rf)\right)^{\otimes k+1}$. Here, as before, the
tensor product is understood to be over $\Dpoly{-1}(\Rf)$ and
completed. This allows us to define the Gerstenhaber bracket
$[\argument,\argument]_G$ which endows $\Dpoly{}
(\Rf)$ with a graded Lie algebra structure. 
% This 
% bracket is given by a a pre-Lie structure $\bullet$ 
% (which in turns is given by brace algebra structure)
% expressed in terms of the bialgebra structure. 
%
%
\begin{theorem}[Kontsevich\cite{Kontsevich2003}]
  \label{formalformality}
  There exists an $L_\infty$-quasi-isomorphism between dgla's
  \begin{equation}
    \mathscr{K}\colon 
    \left(\Tpoly{}(\Rf),0,\Schouten{\argument,\argument}\right)
    \longrightarrow 
    \left(\Dpoly{}(\Rf),\partial,[\argument,\argument]_G\right)
  \end{equation} 
  where $\partial = [\mu,\argument]_G$ for $\mu=1\otimes 1\in \Dpoly{1}(\Rf)$. Moreover 
  \begin{enumerate} 
  \item  \label{formalformality1} $\mathscr{K}$ is $\group{GL}(d,\R)$ invariant;
  \item  \label{formalformality2} $\mathscr{K}_n(X_1\vee \ldots\vee X_n)=0$ for all $X_i\in \Tpoly{0}(\Rf)$ and $n>1$;
  \item  \label{formalformality3} $\mathscr{K}_n(X\vee Y_2\vee\ldots\vee Y_n)=0$ for all 
  $Y_i\in\Tpoly{}(\Rf)$ and $n\geq 2$ whenever $X\in\Tpoly{0}(\Rf)$ is induced by the action of $\mathfrak{gl}(d,\R)$. 
  \end{enumerate}
\end{theorem}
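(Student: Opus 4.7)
The plan is to construct the Taylor coefficients $\mathscr{K}_n$ explicitly via Kontsevich's graphical formula. For each pair of nonnegative integers $n,m$ one considers the finite set $G_{n,m}$ of labelled admissible directed graphs with $n$ vertices of the first type (``aerial'') corresponding to the polyvector slots and $m$ vertices of the second type (``ground'') corresponding to function slots, subject to the combinatorial rules that an aerial vertex of outgoing valence $k+1$ is assigned to a $k$-vector and that edges land either at a ground vertex or at a distinct aerial vertex. One then sets
\begin{equation*}
  \mathscr{K}_n(\gamma_1\vee\ldots\vee\gamma_n)(f_1,\ldots,f_m)
  =
  \sum_{\Gamma\in G_{n,m}} w_\Gamma\, B_\Gamma(\gamma_1,\ldots,\gamma_n)(f_1,\ldots,f_m),
\end{equation*}
where $B_\Gamma$ is the polydifferential operator read off from $\Gamma$ by placing a partial derivative on each edge and contracting with the appropriate index of the polyvector at the source, and where $w_\Gamma$ is an integral over the Kontsevich compactification of the configuration space of $n$ distinct points in the open upper half-plane together with $m$ ordered distinct points on the real line, modulo the affine group, of a product of ``angle forms'', one per edge.

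The next step is to verify the $L_\infty$-morphism equation $\mathscr{K} Q_{\Tpoly{}} = Q_{\Dpoly{}}\mathscr{K}$ on $S^c(\Tpoly{}(\Rf)[1])$. The crucial input is Stokes' theorem applied to the compactified configuration space: the integral of the exterior derivative of the angle product over the interior vanishes and equals the alternating sum over codimension-one boundary strata. These strata fall into two types, corresponding to collapsing a subconfiguration either at an interior point (Type I, producing either a Schouten bracket contribution or a Gerstenhaber bracket contribution depending on the colouring of the collapsing cluster) or onto the boundary line (Type II, producing composition with the Hochschild cocycle $\mu=1\otimes 1$, i.e.\ with $\partial$). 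A careful signs-and-combinatorics accounting translates the Stokes identity precisely into the required quadratic relation, provided one invokes Kontsevich's vanishing lemma: configuration integrals admitting a free $S^1$-action vanish, which eliminates all potentially obstructing boundary contributions.

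For the remaining claims, a direct inspection shows that $\mathscr{K}_1$ reduces to the Hochschild--Kostant--Rosenberg map, so by the classical HKR theorem $\mathscr{K}$ is a quasi-isomorphism of complexes. Property~\ref{formalformality1} is immediate because $B_\Gamma$ is built only from contractions of $\partial/\partial x^i$ with $\D x^i$, both $\group{GL}(d,\R)$-equivariant. Property~\ref{formalformality2} follows from the $S^1$-vanishing lemma applied to graphs all of whose aerial vertices have a single outgoing edge: simultaneous rotation of any two such vertices about their midpoint generates a free $S^1$-action on the configuration domain, forcing $w_\Gamma=0$ for $n>1$. Property~\ref{formalformality3} refines this: a linear vector field $X\in\Tpoly{0}(\Rf)$ induced by the $\mathfrak{gl}(d,\R)$-action forces its associated aerial vertex to be attached by one outgoing edge to exactly one other aerial vertex, and the same $S^1$-argument then applies to the two-vertex subconfiguration and kills the weight.

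The main obstacle will be the verification of the quadratic $L_\infty$ relation. The identification of boundary strata of Kontsevich's compactification with dgla operations on both sides requires detailed bookkeeping of orientations and signs, of the relation between graph automorphisms and the symmetry factors in the cofree coalgebra, and of which strata survive the $S^1$-vanishing lemma. Once that machinery is in place, properties \ref{formalformality1}--\ref{formalformality3} are corollaries of essentially one vanishing principle rather than independent problems; the genuinely delicate work is concentrated entirely in showing that the explicit Kontsevich formula does satisfy the $L_\infty$-morphism equation.
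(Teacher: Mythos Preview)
The paper does not prove this theorem. It is quoted as Kontsevich's result and used as a black-box input for the globalization procedure that is the actual subject of the paper; there is no ``paper's own proof'' to compare your proposal against.

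That said, your outline is a fair summary of Kontsevich's original argument, with one inaccuracy worth flagging. Your mechanism for property~\ref{formalformality2} is not correct: there is no free $S^1$-action on the bulk configuration space obtained by ``rotating two aerial vertices about their midpoint'' --- the points live in the upper half-plane and such a rotation does not preserve it. Kontsevich's actual argument for this property is a dimension count: when every $\gamma_i$ is a vector field each aerial vertex emits exactly one edge, so the angle form has degree $n$, while $\overline{C}_{n,m}$ has dimension $2n+m-2$; these match only for $n+m=2$, which for $n\geq 2$ forces $m\leq 0$, and the remaining boundary case is handled separately. Your description of property~\ref{formalformality3} is closer to the mark, though the precise reason the weight dies is that the vertex carrying the linear vector field has exactly one incoming and one outgoing edge, and the resulting two-point subintegral factors off and vanishes (this is the content of Kontsevich's vanishing lemma, applied in the interior rather than at a boundary stratum).
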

The definitions of the dgla's $\Tpoly{}(\Rf)$ and $\Dpoly{}(\Rf)$ go
through mutatis mutandis to define the dgla's $ \Tpoly{}(M)$ and $
\Dpoly{}(M)$ on a generic manifold $M$, starting from the Lie-Rinehart pair $\left(\Cinfty(M),
\Secinfty(TM)\right)$. Note that the resulting spaces $
\Dpoly{k}(M)$ can be identified with the vector space of
polydifferential operators of order $k+1$.
%
%\begin{definition}[Fiberwise polyvector fields and polydifferential operators]

The bundle $ \FibT{}$ of \emph{formal fiberwise polyvector}
  fields is the bundle over $M$ with fiber $\Tpoly{}(\Rf)$
  associated with the principal bundle of general linear frames in
  $TM$. Similarly, for the bundle $ \FibD{}$ of \emph{formal fiberwise
  polydifferential operators}.
The differential forms with values in these bundles form the dgla's $(
\FormsT{}{M}{} , 0 , \Schouten{\argument , \argument})$ and $(
\FormsD{}{M}{} , \partial , [\argument , \argument]_G)$ respectively. The dgla structure may be 
induced from the dgla structures on the fibers since
it is compatible with the general linear action.

Note that $\Tpoly{}(\Rf)\hookrightarrow\Dpoly{}(\Rf)$ 
by the usual anti-symmetrization map and thus also 
$\FormsT{}{M}{}\hookrightarrow\FormsD{}{M}{}$. 
So, any element $A\in\FormsT{\ell}{M}{k}$ defines an 
operator $\Schouten{A,\argument}$ of degree $k+\ell$ on $\FormsT{}{M}{}$ and 
$\FormsD{}{M}{}$. 
Let us consider the $\FibT{}$ analog of the fundamental one-form used in the Fedosov
construction \cite[Def. 1.3.1]{Fedosov1996}, which we denote by $A_{-1}$ and set 
$\delta=\Schouten{A_{-1},\argument}$. In local coordinates $(x^1,\ldots, x^n)$ we have 
$A_{-1} = \sum_{i=1}^n\partial_{\hat{x}^i} \D x^i$ and 
thus $[A_{-1},A_{-1}]=0$ which implies that 
$\delta^2=0$ and yields the dgla's 
$(\FormsT{}{M}{},-\delta, \Schouten{\argument,\argument})$ 
and $(\FormsD{}{M}{},\partial-\delta,[\argument,\argument]_G)$. 
%
% Now consider the projection $\sigma$ from 
% $\FormsT{}{M}{}$ and $\FormsD{}{M}{}$ to the part of 
% $\FormsT{0}{M}{}$ and $\FormsD{0}{M}{}$ respectively 
% that is of degree $0$ in the fibers. Furthermore we 
% may consider the linear operator $\delta^{-1}$ given in coordinates by  
% $\delta^{-1}\omega=\frac{1}{k+l}\sum_{i=1}^n\hat{x}^i
% \iota_{\partial_{x^i}}\omega$ for $\omega$ in the 
% fiberwise degree $l$ part of $\FormsT{k}{M}{}$ or $\FormsD{k}{M}{}$ if $k+l>0$ and $\delta^{-1}
% \omega=0$ otherwise. One may then check that 
% \begin{equation}
% \id-\hspace{0.1cm}\sigma=\delta\delta^{-1}+\delta^{-1}\delta,
% \end{equation}
% i.e. $-\delta^{-1}$ is a contraction onto the subcomplex $\mbox{Im}\hspace{0.1cm} \sigma$. Clearly 
% $\sigma(\FormsT{}{M}{})\simeq \Tpoly{}(M)$ and 
% $\sigma(\FormsD{}{M}{})\simeq\Dpoly{}(M)$ as vector 
% spaces, but not as dgla's ($\sigma(\FormsT{}{M}{})$ is Abelian for instance). 

Following the idea of Fedosov, one changes the 
differential $\delta$ by adding terms of higher 
degree in the fiberwise grading. This way the 
cohomology remains the same, but with the correct
dgla structure. Since $\delta$ is of fiberwise degree
$-1$ we start by adding a linear connection $\conn$ 
as a degree $0$ term. It can be checked that $\conn A_{-1}$ 
coincides with the $\FibT{}$ equivalent of the 
torsion 2-form of $\conn$. Thus by picking a torsion-free
connection $\conn$ we find that  
$(-\delta+\conn)^2=\conn^2$.  Since
there is no reason to assume that we can find $\conn$ such that
$\conn^2=0$ we correct $-\delta+\conn$ by an inner derivation and make
the ansatz
\begin{equation}
\label{eq:fedosovD}
  D
  :=
  -\delta + \conn + \Schouten{A, \argument}
\end{equation}
with $A \in \FormsT{1}{M}{0}$ (of fiberwise degree greater than $1$). It can be proved that one may 
always find $A$ such that $D^2=0$. In fact such $A$
is unique if one adds the normalization 
$\delta^{-1}A=0$. Such $A$ also yields the maps from
$\Tpoly{}(M)$ and $\Dpoly{}(M)$ to $\FormsT{}{M}{}$ 
and $\FormsD{}{M}{}$ respectively that yield the 
following theorem.
\begin{theorem}[Fedosov Resolutions, Dolgushev \cite{Dolgushev2005,Dolgushev2005a}]
  \label{FedRes}
 There exist dgla quasi-isomorphisms 
  \begin{equation*}
    \lambda_D
    \colon 
    \left(\Dpoly{}(M),\partial\right)\longrightarrow			\left(\FormsD{}{M}{},\partial + D\right)
    \quad\text{and}\quad
    \lambda_T
    \colon 
    \left(\Tpoly{}(M),0\right)\longrightarrow \left(\FormsT{}{M}{},D\right).
  \end{equation*} 
\end{theorem}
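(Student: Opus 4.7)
The plan is to construct $\lambda_T$ and $\lambda_D$ explicitly via a Fedosov-type iteration, verify their dgla properties by uniqueness arguments, and then establish the quasi-isomorphism property by computing the cohomology of $D$.

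First, I would introduce the Fedosov contracting homotopy $\delta^{-1}$ for the Koszul-type differential $\delta = \Schouten{A_{-1},\argument}$ of fiberwise degree $-1$. Together with the projection $\sigma$ to fiberwise and horizontal degree zero, this gives a Hodge-type decomposition $\id = \sigma + \delta \delta^{-1} + \delta^{-1}\delta$ on $\FormsT{}{M}{}$. Then I would define $\lambda_T(v)$ for $v \in \Tpoly{}(M)$ as the unique solution of the fixed-point equation
\begin{equation*}
  \lambda_T(v) = v + \delta^{-1}\bigl(\conn \lambda_T(v) + \Schouten{A, \lambda_T(v)}\bigr),
\end{equation*}
constructed by iteration in the fiberwise grading (each correction raises this degree by at least one, so convergence holds in the complete filtration topology). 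By construction $\sigma \lambda_T(v) = v$, and applying $D$ to both sides and using $D^2 = 0$ together with the Hodge decomposition forces $D \lambda_T(v) = 0$. The map $\lambda_D$ is defined by the analogous fixed-point equation with $\Schouten{A,\argument}$ replaced by the Gerstenhaber bracket $[A,\argument]_G$.

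Next, I would verify the dgla morphism properties. Multiplicativity of $\lambda_T$ with respect to $\Schouten{\argument,\argument}$, and of $\lambda_D$ with respect to $[\argument,\argument]_G$, follows from a standard uniqueness argument: both $\lambda_T(\Schouten{u,v})$ and $\Schouten{\lambda_T u, \lambda_T v}$ lie in $\Ker D$, both project via $\sigma$ to $\Schouten{u,v}$, and the fixed-point equation has a unique such solution, so they coincide. Compatibility with the differentials is automatic for $\lambda_T$ since the source differential vanishes and the image lies in $\Ker D$. For $\lambda_D$ one needs $\lambda_D(\partial v) = (\partial + D)\lambda_D(v)$; since $\lambda_D(v) \in \Ker D$, this reduces to $\lambda_D \partial = \partial \lambda_D$, and the same uniqueness principle applies once one observes that $\partial$ commutes with $D$ and is intertwined with the source $\partial$ under $\sigma$.

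The last and most delicate step is the quasi-isomorphism property, which I would reduce to the claim that the cohomology of $(\FormsT{}{M}{},D)$ is concentrated in horizontal and fiberwise degree zero, where it coincides with $\Tpoly{}(M)$ via $\sigma$, and analogously for $(\FormsD{}{M}{},\partial+D)$. The natural tool is an induction on the fiberwise grading: the leading part of $D$ is $-\delta$, whose cohomology is computed by $\delta^{-1}$ and is concentrated precisely where $\sigma$ is non-trivial; the higher-order corrections $\conn + \Schouten{A,\argument}$ strictly raise the fiberwise degree and therefore do not alter the leading cohomology. The main obstacle will be the careful bookkeeping of the bigrading by fiberwise and horizontal degrees throughout this spectral-sequence-style argument, in particular for the Gerstenhaber side where one additionally has the $\partial$ direction to track. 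Once acyclicity in positive degrees is combined with $\sigma \lambda_T = \id$ (and its analog $\sigma \lambda_D = \id$), the quasi-isomorphism statements follow as in \cite{Dolgushev2005,Dolgushev2005a}.
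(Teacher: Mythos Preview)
The paper does not supply its own proof of this theorem: it is stated as a result of Dolgushev, with only a short paragraph preceding it that sketches the construction of the Fedosov differential $D$ and remarks that ``such $A$ also yields the maps from $\Tpoly{}(M)$ and $\Dpoly{}(M)$ to $\FormsT{}{M}{}$ and $\FormsD{}{M}{}$ respectively that yield the following theorem.'' No proof environment follows; the reader is referred to \cite{Dolgushev2005,Dolgushev2005a}.

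Your sketch is a faithful outline of the argument in those references: the fixed-point iteration in the fiberwise grading to build $\lambda_T$ and $\lambda_D$, the uniqueness argument for multiplicativity, and the filtration/spectral-sequence reasoning (with leading term $-\delta$ and contracting homotopy $\delta^{-1}$) to identify the $D$-cohomology. So there is nothing in the present paper to compare against beyond the citation, and your proposal matches the cited approach.
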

\begin{example}[Formality for $\R^d$]
  \label{formalityRd}
  In this example we generalize the result of Theorem
  \ref{formalformality} from $\Rf$ to $\R^d$ by using the twisting
  procedure discussed in Section~\ref{sec:preliminaries}.  Note that we are looking for an
  $L_\infty$-quasi-isomorphism
  \begin{equation*}
    \mathscr{U}^{A_{-1}}\colon(\Omega(\R^d;\FibT{}),D)\longrightarrow (\Omega(\R^d,\FibD{})
    ,\partial+D),
  \end{equation*}
  since this would complete the diagram
  \begin{equation}\label{seqquisRd}
    (\Tpoly{}(\R^d),0)\stackrel{\lambda_T}{\longrightarrow}
    (\Omega(\R^d;\FibT{}),D)\stackrel{\mathscr{U}^{A_{-1}}}{\longrightarrow}
    (\Omega(\R^d,\FibD{})
    ,\partial+D)\stackrel{\lambda_D}{\longleftarrow}
    (\Dpoly{}(\R^d),\partial)
  \end{equation}
  of $L_\infty$-quasi-isomorphisms. Also, note that
  $D:=-\delta+\D$ follows from the natural choice $\conn=\D$. We obtain this map $\mathscr{U}^{A_{-1}}$ as follows.
  First we note that, by applying the map $\mathscr{K}$ from Theorem
  \ref{formalformality} fiberwise, we obtain the $L_\infty$-morphism
  \begin{equation*}
    \mathscr{U}
    \colon 
    (\Omega(\R^d;\FibT{}),\D)
    \longrightarrow 
    (\Omega(\R^d;\FibD{}),\partial+\D).
  \end{equation*}
  By considering the filtrations by exterior degree on both these
  algebras we construct spectral sequences which show that
  $\mathscr{U}$ is a quasi-isomorphism. Using this same filtration we
  may consider the MC element
  $A_{-1}\in\mathcal{F}^1\Omega(\R^d;\FibT{})$. Now note that
  $\Omega(\R^d;\FibT{})^{A_{-1}}$ is exactly
  $(\Omega(\R^d;\FibT{}),D)$ and
  $\Omega(\R^d;\FibD{})^{{A_{-1}}_\mathscr{U}}$ is exactly
  $(\Omega(\R^d;\FibD{}),D)$, since ${A_{-1}}_{\mathscr{U}}=A_{-1}$ by
  point (ii) of Theorem \ref{formalformality}. So we obtain the
  diagram \eqref{seqquisRd}.  Finally, to obtain the quasi-isomorphism
  \begin{equation*}
    \mathscr{U}
    \colon 
    \Tpoly{}(\R^d)\longrightarrow \Dpoly{}(\R^d)
  \end{equation*} 
  we need to invert the final arrow in the diagram \eqref{seqquisRd}.
  To do this we note that this arrow is actually an identification (by
  dgla-morphism) with the kernel of $D$ in exterior degree $0$.  Thus
  it can be inverted if we can guarantee that the map
  $\mathscr{U}^{A_{-1}}\circ\lambda_T$ maps $\Tpoly{}(\R^d)$ into this
  kernel. In \cite{Dolgushev2005} Dolgushev demonstrates a procedure to construct an $L_\infty$-morphism $\mathcal{V}$ homotopic to $\mathcal{U}^{A_{-1}}\circ\lambda_T$ that has this property.
\end{example}
%
% In this section we apply the results obtained above to prove formality theorems.
% %
% Let $M$ be a generic smooth manifold and suppose that $\conn$ is any
% linear torsion-free connection on $M$ with curvature $\rho$. 
%
%
The next step consists in the globalization, i.e. the generalization of the above result
to any manifold $M$. The only delicate part is now the twisting procedure. 
Dolgushev presents the twisting locally in a
way that is compatible on pairwise intersections of coordinate
charts.  This means that the global quasi-isomorphism is not
described, a priori, as a twist of another morphism.  In the following we
prove that the quasi-isomorphism of Fedosov resolutions is given by a twist of 
the fiberwise map $\mathscr{U} \colon \Omega(M;\FibT{}) \to \Omega(M;\FibD{})$, 
by showing that it induces a morphism of resolutions and using Prop.~\ref{prop:key}.
First, we need to find suitable resolutions of $\Omega(M;\FibT{})$ and $\Omega(M;\FibD{})$.
Let us fix a good cover $(U_i)_{i\in \mathcal{I}}$ of $M$ by coordinate neighborhoods. 
By abuse of notation we shall denote the set of $k$-tuples $(i_1,\ldots, i_k)$ in $\mathcal{I}$ 
such that $U_{i_1}\cap\ldots\cap U_{i_k}\neq\emptyset$ by $\mathcal{I}^k$. 
For $(i_1,\ldots, i_k)\in \mathcal{I}^k$ we shall denote $U_{i_1,\ldots, i_k}:=U_{i_1}\cap \ldots\cap U_{i_k}$.
As discussed in Example~\ref{ex:curvedlie}, $\left(\Omega(M;\FibT{}), \conn-\delta,  \Schouten{\argument,\argument}\right)$ 
and $\left(\Omega(M;\FibD{}), \partial+\conn-\delta, [\argument,\argument]_G\right)$, 
being curved Lie algebras, have the corresponding structure of $L_\infty$-algebras. Note that for each $a\in\mathcal{I}^k$ 
we obtain the curved Lie algebras $(\Omega(U_a;\FibT{}),Q^a)$ and $(\Omega(U_a;\FibD{}),P^a)$ by simply
restricting the structures on $\Omega(M;\FibT{})$ and $\Omega(M;\FibD{})$ respectively. 
In order to obtain an $L_\infty$-algebra structure on $\check{C}^i(\mathcal{I},\Omega(M;\FibT{})) = 
\prod_{a\in \mathcal{I}^i}\Omega(U_a;\FibT{})$ we need to introduce the notion of product of $L_\infty$-algebras. 
    
Let $\left\{(\mathfrak{L}_i,Q^i)\right\}_{i\in I}$ be a collection of $L_\infty$-algebras indexed over $I$. 
Set 
\begin{equation*}
  \prod_{i\in I}\mathfrak{L}_i:=\left\{f\in\Hom_{\mathsf{Set}}
  \left(I,\coprod_{i\in I}\mathfrak{L}_i\right)\mid
  f(i)\in\mathfrak{L}_i\right\}.
\end{equation*}
Note that $\prod_{i\in I}\mathfrak{L}_i$ is a vector space with 
$(f+g)(i)=f(i)+g(i)$, $0(i)=0$ and 
$(\lambda f)(i)=\lambda f(i)$ for all $i\in I$ and 
$\lambda$ scalar. Moreover, $\prod_{i\in I}\mathfrak{L}_i$ inherits a $\mathbb{Z}$-grading where
$f$ is homogeneous of degree $n$ if $f(i)$ is homogeneous of degree $n$ for all $i\in I$. 
Furthermore, we obtain the projections $\prod_{i\in I}\mathfrak{L}_i\rightarrow \mathfrak{L}_i$ 
as evaluation at $i\in I$. 
\begin{definition}[Product of $L_\infty$-algebras]
\label{def:prod}
  A graded vector space $\prod_{i\in I}\mathfrak{L}_i$ is called product of $L_\infty$-algebras if equipped 
  with the $L_\infty$-structure $Q$ given by the components 
   \begin{equation*}
    Q_0(1)
    =
    (i\mapsto Q_0^i(1))
    \quad\textnormal{  and  }\quad
    Q_k(f_1\vee\ldots\vee f_k)=(i\mapsto Q^i_k(f_1(i)\vee\ldots\vee f_k(i)))
  \end{equation*}
  for all $k\geq 1$.  We denote the product of $(\mathfrak{L}_i,Q^i)_{i\in
    I}$ by $\prod_{i\in I}(\mathfrak{L}_i,Q^i)$.
\end{definition}
\begin{example}[\v{C}ech complex]
	\label{Cechcomplex}
	By the above definition, we immediately obtain an $L_\infty$-algebra structure on 
    $\check{C}^i(\mathcal{I},\Omega(M;\FibT{}))=\prod_{a\in \mathcal{I}^i}\Omega(U_a;\FibT{})$ 
    for all $i\geq0$ and similarly for $\FibD{}$. 
    We denote these structures by $\mathcal{Q}^i$ and $\mathcal{P}^i$ respectively. 
\end{example}
As an immediate consequence of Definition~\ref{def:prod} 
we obtain the following lemma.
\begin{lemma}
  The limit of the discrete diagram of $L_\infty$-algebras $(\mathfrak{L}_i, Q^i)$ exists and is given by the 
  product $\prod_{i\in I}(\mathfrak{L}_i,Q^i)$. The relevant
  projection maps $\pr^i\colon\prod_{i\in I}(\mathfrak{L}_i,Q^i)\rightarrow (\mathfrak{L}_i,Q^i)$ 
  are given by the components $\pr^i_1(f) = f(i)$ and $\pr^i_k=0$ for all $k>1$.
\end{lemma}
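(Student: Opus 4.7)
The plan is to establish two things: that the construction of Definition~\ref{def:prod} indeed yields an $L_\infty$-algebra with each $\pr^i$ as an $L_\infty$-morphism, and that this data satisfies the universal property of a categorical product (which, for a discrete diagram, coincides with the limit).

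First I would verify $Q^2=0$ for the product coderivation. This condition unpacks into a countable family of quadratic relations among the Taylor coefficients $Q_k$. Since both the shuffle sum and the $\vee$-product in $\prod_{i\in I}\mathfrak{L}_i$ are evaluated componentwise by construction, each such relation restricts at an index $i\in I$ to the corresponding relation for $Q^i$, which holds by assumption. For the compatibility with the projections I would observe that, as only $\pr^i_1$ is nonvanishing, formula~\eqref{coalgebramorphism} kills every contribution with some $k_j>1$; the induced coalgebra morphism is then just componentwise evaluation on symmetric powers, and the identity $\pr^i\circ Q = Q^i\circ\pr^i$ becomes, on Taylor components, exactly the defining formula $Q_k(f_1\vee\ldots\vee f_k)(i)=Q^i_k(f_1(i)\vee\ldots\vee f_k(i))$ of Definition~\ref{def:prod}.

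For the universal property, given an $L_\infty$-algebra $(\mathfrak{M},R)$ together with $L_\infty$-morphisms $F^i\colon \mathfrak{M}\to \mathfrak{L}_i$ for each $i\in I$, I would define $F\colon \mathfrak{M}\to \prod_{i\in I}\mathfrak{L}_i$ via the Taylor coefficients
\[
  F_k(m_1\vee\ldots\vee m_k) := \bigl(i\mapsto F^i_k(m_1\vee\ldots\vee m_k)\bigr)
\]
for $k\geq 1$. The $L_\infty$-morphism condition $F\circ R = Q\circ F$ may then be checked after evaluation at each $i\in I$, where it reduces to the analogous condition for $F^i$; both $\pr^i\circ F = F^i$ and the uniqueness of $F$ follow at once from the prescription for $\pr^i$, since any $G$ with $\pr^i\circ G = F^i$ must have Taylor coefficients agreeing with those of $F$ pointwise in $i$.

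I do not anticipate a serious obstacle: the construction is the pointwise one familiar from categorical products of algebraic structures with componentwise operations. The only bookkeeping is the straightforward verification that signs, shuffles, grading and filtration are preserved under the componentwise definitions, which is automatic since each relevant operation is defined index by index.
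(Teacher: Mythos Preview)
Your proposal is correct and follows precisely the natural argument that the paper leaves implicit: the paper states this lemma as ``an immediate consequence of Definition~\ref{def:prod}'' and gives no proof at all. Your outline simply spells out what that immediacy amounts to---componentwise verification of $Q^2=0$, of the $L_\infty$-morphism condition for the projections, and of the universal property---which is exactly the routine check the authors intend the reader to supply.
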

Let $\{(\mathfrak{L}_i,Q^i)\}_{i\in I}$ and 
$\{(\mathfrak{K}_i,Q^i)\}_{i\in I}$ be two collections of
$L_\infty$-algebras and $F^i$ be morphisms from
$\mathfrak{L}_i$ to $\mathfrak{K}_i$. Then we obtain the morphisms 
\begin{equation*}
	F^j\circ \pr^j\colon \prod_{i\in I}(\mathfrak{L}_i, Q^i)\longrightarrow (\mathfrak{K}_j,P^j)
\end{equation*}
for all $j\in I$. By the universal property of the 
product we thus obtain the product morphism 
\begin{equation}
	\label{eq:prodmorph}
	F\colon \prod_{i\in I}(\mathfrak{L}_i,Q^i)\longrightarrow \prod_{i\in I}(\mathfrak{K}_i,P^i).
\end{equation}
\begin{lemma}
	\label{lem:pilist}
	Given a collection of elements $\pi_i\in\mathcal{F}^1\mathfrak{L}_i[1]^0$
    and the corresponding $\pi\in \mathcal{F}^1\prod_{i\in I}\mathfrak{L}_i[1]^0$ 
    given by $(i\mapsto \pi_i)$, we have the following properties:
    \begin{lemmalist}
  		\item The $L_\infty$-algebra $\left(\prod_{i\in I}\mathfrak{L}_i,Q^\pi\right)$ is naturally
           $L_\infty$-isomorphic to $\prod_{i\in I}(\mathfrak{L}_i,(Q^i)^{\pi_i})$ .
    	\item Given a collection of morphisms $F^i$ between $\mathfrak{L}^i$ and $\mathfrak{K}^i$, 
           the collection $(F^i)^{\pi_i}$ induces the $L_\infty$-morphism $F^\pi$ using the notation of \eqref{eq:prodmorph}. 
  		\item If all the elements $\pi_i$ are Maurer--Cartan, then $\pi$ is Maurer--Cartan.
  \end{lemmalist}
\end{lemma}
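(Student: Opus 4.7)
The plan is to exploit the defining property of the product $L_\infty$-structure, namely that every higher bracket $Q_k$ on $\prod_{i\in I}\mathfrak{L}_i$ acts purely componentwise via $(Q_k(f_1\vee\ldots\vee f_k))(i)=Q^i_k(f_1(i)\vee\ldots\vee f_k(i))$. All three statements of the lemma are essentially the observation that the three operations we care about here (twisting, taking Taylor components of a morphism, and the MC equation) all commute with the componentwise projections $\pr^i$. I expect no serious obstacle; the content is bookkeeping and the ``hard'' step is only to set up the identifications cleanly.

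For \textit{i.)}, I would take the putative isomorphism to be the identity on the underlying graded vector space $\prod_{i\in I}\mathfrak{L}_i$ (and hence on $S^c(\prod_i\mathfrak{L}_i)$), so that the statement reduces to showing the two coderivations coincide. Because the product coderivation is componentwise, its canonical extension to $S^c(\prod_i\mathfrak{L}_i)$ also evaluates componentwise, i.e.\ $(Q(X))(i) = Q^i(X(i))$, where we evaluate an element $X\in S^c(\prod_i\mathfrak{L}_i)$ coordinatewise through the natural map to $\prod_i S^c(\mathfrak{L}_i)$. Applied to $\exp(\pi)\vee a$, and using $\pi(i)=\pi_i$ so that $\exp(\pi)(i)=\exp(\pi_i)$, we get
\begin{equation*}
   (Q^\pi(a))(i) = (\exp(-\pi)\vee Q(\exp(\pi)\vee a))(i) = \exp(-\pi_i)\vee Q^i(\exp(\pi_i)\vee a(i)) = (Q^i)^{\pi_i}(a(i)),
\end{equation*}
which is exactly the defining identity for the product coderivation $\prod_i (Q^i)^{\pi_i}$.

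For \textit{ii.)}, by the universal property the product morphism $F$ of \eqref{eq:prodmorph} is uniquely determined by $\pr^j\circ F = F^j\circ \pr^j$ for all $j$, and the claim is that this equation remains true after twisting both sides simultaneously. This is a direct consequence of part \textit{i.)} together with the definition of the twist of a morphism: evaluating
\begin{equation*}
  F^\pi(a) = \exp(-\pi_F)\vee F(\exp(\pi)\vee a)
\end{equation*}
at $j\in I$ and using that $F$ acts componentwise (so that $\pi_F(j)=(\pi_j)_{F^j}$) reduces the right-hand side to $(F^j)^{\pi_j}(a(j))$. By the universal property, this is exactly the product morphism induced by the $(F^i)^{\pi_i}$, which is what we wanted.

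Finally, \textit{iii.)} is immediate from the componentwise definition of $Q$: the MC equation
\begin{equation*}
  \sum_{n\geq 0}\frac{Q_n(\pi^n)}{n!}=0
\end{equation*}
holds on the product if and only if it holds after evaluation at every $i\in I$, and evaluation at $i$ yields precisely $\sum_{n\geq 0}Q^i_n(\pi_i^n)/n!=0$, the MC equation for $\pi_i$ in $\mathfrak{L}_i$. Alternatively one may observe that by \textit{i.)} the flatness condition $Q^\pi_0=0$ on the product is equivalent to flatness of each $(Q^i)^{\pi_i}$, which is precisely the MC condition on each $\pi_i$.
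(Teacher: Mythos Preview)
Your proof is correct and follows essentially the same approach as the paper: both exploit that the product structure $Q$ and product morphism $F$ act componentwise, so twisting commutes with the projections $\pr^i$. The only cosmetic difference is that the paper writes out the Taylor-coefficient formulas $Q_k^\pi=\sum_{\ell\geq 0}\frac{1}{\ell!}Q_{k+\ell}(\pi^\ell\vee\,\cdot\,)$ and $F_k^\pi=\sum_{\ell\geq 0}\frac{1}{\ell!}F_{k+\ell}(\pi^\ell\vee\,\cdot\,)$ and reads off the componentwise identity from those, whereas you work one level up with the coalgebra-level expressions $\exp(-\pi)\vee Q(\exp(\pi)\vee\,\cdot\,)$ and $\exp(-\pi_F)\vee F(\exp(\pi)\vee\,\cdot\,)$; for part \textit{iii.)} the paper likewise deduces $Q^\pi(1)(i)=(Q^i)^{\pi_i}(1)$ from part \textit{i.)}, exactly as in your alternative argument.
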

\begin{proof}
  The first claims follows easily from the formula 
  \begin{equation*}
      Q_k^\pi(\gamma_1,\ldots, \gamma_k)
      =
      \sum_{\ell=0}^\infty \frac{1}{\ell!}Q_{k+\ell}(\pi^\ell\vee\gamma_1\vee\ldots\vee\gamma_k)
  \end{equation*}
  for $Q^\pi$. The second claim follows from the similar expression 
  \begin{equation*}
      F_k^\pi(\gamma_1\vee\ldots\vee\gamma_k)
      =
      \sum_{\ell=0}^\infty\frac{1}{\ell!}F_{k+\ell}(\pi^\ell\vee\gamma_1\vee\ldots\vee\gamma_k)
  \end{equation*}
  for the twist of an $L_\infty$-morphism. Finally by the first point we have 
  $Q^\pi(1)(i)=(Q^i)^{\pi_i}(1)$ which shows the third claim. 
\end{proof}
Recall the $L_\infty$-structures on the components of the 
\v{C}ech complex $\check{C}^\bullet(\mathcal{J};\Omega(M;\FibT{}))$ from Example~\ref{Cechcomplex}. For $i\geq 0
$ and each $a\in \mathcal{J}^i$ the restriction map from 
$\Omega(M;\FibT{})$ to $\Omega(U_a;\FibT{})$ induces the 
$L_\infty$-morphism $R^a$ where $R^a_i=0$ for all $i>1$ and
where $R^a_1$ is the restriction map. By the universal 
property of the product these maps combine into the 
$L_\infty$-morphisms 
\begin{equation}
	\mathcal{R}^i\colon \Omega(M;\FibT{})\longrightarrow \check{C}^i(\mathcal{J};\Omega(M;\FibT{})).
\end{equation}
We equip the components $\check{C}^i(\mathcal{J};\Omega(M;\FibT{}))$ of the \v{C}ech complex with the structures 
$\varphi^i$ of $L_\infty$-modules over $\FormsT{}{M}{}$ 
induced from the maps $\mathcal{R}^i$  as in Example~\ref{morphmodule}.
In the case of $\FormsD{}{M}{}$ we denote the restriction maps by $\mathcal{A}^i$. Note that, since $\conn$ is a \emph{linear} connection and by 
Theorem~\ref{formalformality} \refitem{formalformality3}, the fiberwise application of
$\mathscr{K}$ yields the $L_\infty$-morphism $\mathscr{U}$ from $\Omega(M;\FibT{})$ to
$\Omega(M;\FibD{})$ with the $L_\infty$-structures with first Taylor coefficients 
$\conn-\delta$ and $\partial+\conn-\delta$ respectively. 
Thus we may equip the components 
$\check{C}^i(\mathcal{J};\Omega(M;\FibD{}))$ of the \v{C}ech
complex of $\FormsD{}{M}{}$ with the $\FormsT{}{M}{}$-module 
structures $\psi^i$ obtained through example \ref{morphmodule} by
the maps $\mathcal{A}^i\circ\mathscr{U}$. Of course we may also 
consider $\FormsD{}{M}{}$ itself as an $\FormsT{}{M}{}$ module by
applying example \ref{morphmodule} to the map $\mathscr{U}$.  
\begin{lemma}
\label{lem:formality1}
  The sequence 
  \begin{equation}0\rightarrow\Omega(M;\FibT{})\stackrel{\mathcal{R}^0}{\longrightarrow} 
  \check{C}^0(\mathcal{J};\Omega(M;\FibT{}))\stackrel{\partial_0}{\longrightarrow}\check{C}^1(\mathcal{J};\Omega(M;\FibT{}))\stackrel{\partial_1}{\longrightarrow}\ldots\end{equation}
  forms a resolution of $\Omega(M;\FibT{})$ and there is a similar resolution $\mathcal{A}^0$ of $\FibD{}$.
\end{lemma}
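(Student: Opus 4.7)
The plan is to define $\partial_i$ as the standard \v{C}ech coboundary operators, namely the alternating sums of the restriction-induced face maps between the \v{C}ech components, and then to verify that each $\partial_i$ is an $L_\infty$-morphism of $\FormsT{}{M}{}$-modules and that the resulting sequence is exact. For the first point I would note that each individual restriction $\Omega(U_a;\FibT{})\to\Omega(U_b;\FibT{})$, for $U_b\subset U_a$, is a dgla morphism and hence an $L_\infty$-morphism whose only non-trivial Taylor coefficient is the underlying restriction map. Combining these via the universal property of the product from Definition~\ref{def:prod}, exactly as in the construction of $\mathcal{R}^i$, and forming the usual signed sum yields $\partial_i$ as an $L_\infty$-morphism with only first Taylor coefficient nonzero.

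For the module-morphism property, it suffices, by the shape of the structures $\varphi^i$ induced from $\mathcal{R}^i$ via Example~\ref{morphmodule}, to check that the classical \v{C}ech differential commutes with the componentwise bracket and fibrewise differential action of $\FormsT{}{M}{}$ on each $\check{C}^i(\mathcal{J};\Omega(M;\FibT{}))$. This is immediate because restriction is a dgla morphism and the face maps restrict the \emph{same} global form, so the $\FormsT{}{M}{}$-factor factors out of the alternating sum; compatibility with the curvature piece $\mathcal{Q}^i_0(1)=(Q^a_0(1))_a$ then follows from the naturality of the product structure together with the obvious identity $\partial_i\circ\mathcal{R}^i=0$.

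For exactness I would use the standard partition-of-unity contracting homotopy. Choosing $(\rho_j)_{j\in\mathcal{J}}$ subordinate to the cover, define $h_i\colon\check{C}^{i+1}\to\check{C}^i$ by $(h_i f)_{(j_1,\ldots,j_i)} = \sum_{k\in\mathcal{J}}\rho_k\cdot f_{(k,j_1,\ldots,j_i)}$, extended by zero outside $\mathrm{supp}(\rho_k)$. A direct computation gives $\partial_{i-1}h_i+h_{i+1}\partial_i=\id$ for all $i\geq 1$, proving exactness of the sequence at every $\check{C}^i$ with $i\geq 1$. Injectivity of $\mathcal{R}^0$ is immediate from $M=\bigcup_j U_j$, and exactness at $\check{C}^0$ is the sheaf-gluing property of $\Omega(-;\FibT{})$. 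The same argument works verbatim for the resolution of $\FormsD{}{M}{}$ since polydifferential operators form a fine sheaf as well. The only delicate point is ensuring the interaction of $\partial_i$ with the curvature contribution of the $\FormsT{}{M}{}$-module structure on each $\check{C}^i$; this is the verification handled in the second paragraph and is the one place where the curved nature of the ambient algebra enters nontrivially.
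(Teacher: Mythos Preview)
Your second paragraph and the partition-of-unity argument are fine and essentially agree with the paper's viewpoint, but the first paragraph contains a real slip that the paper explicitly flags. An alternating sum of strict dgla morphisms is \emph{not} an $L_\infty$-algebra morphism: the coalgebra-map condition \eqref{coalgebramorphism} is multiplicative, not additive, so the signed sum of restriction maps fails to preserve the bracket. Concretely, for $f,g\in\check{C}^i$ one has $\partial_i\Schouten{f,g}\neq\Schouten{\partial_i f,\partial_i g}$ already for $i=0$. Hence you cannot invoke the universal property of the product or Example~\ref{morphmodulemorph} to promote $\partial_i$ to an $L_\infty$-module map; that route works for $\mathcal{R}^0$ (which \emph{is} an $L_\infty$-algebra morphism), but not for the \v{C}ech differentials. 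The paper makes exactly this distinction and then verifies directly the identity
\[
\partial_j\,\varphi^j_k(\gamma_1\vee\ldots\vee\gamma_n\otimes m)
=
\varphi^{j+1}_k(\gamma_1\vee\ldots\vee\gamma_n\otimes \partial_j m),
\]
which is precisely your second-paragraph check. So your argument is salvageable, but you should drop the claim that $\partial_i$ is obtained as an $L_\infty$-morphism via the product's universal property and instead present the direct commutation as the actual proof.

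A smaller point of comparison: in the paper's framework ``resolution'' only means a sequence of $L_\infty$-module morphisms with the obvious composition relations; since $\FormsT{}{M}{}$ is curved, the modules have no differential and hence no cohomology, so exactness is not even formulated at this stage. Your partition-of-unity contracting homotopy is correct, but in the paper it is used later (in Lemma~\ref{lem:gMC}) to show that the \emph{twisted} sequence is acyclic, i.e.\ that $\Gamma$ is a resolution-adapted MC element. Placing it here is harmless, just slightly out of order relative to the paper's logic.
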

\begin{proof}
  The maps $\partial_i$ are given by the components
  $(\partial_i)_j=0$ for all $j>0$ and $(\partial_i)_0$ simply 
  the \v{C}ech differential. Thus it is clear that 
  $\partial_0\circ\mathcal{R}^0=0$ and
  $\partial_{i+1}\circ\partial_i=0$ for all $i\geq 0$ and we 
  only need to show that the maps $\partial_i$ and $\mathcal{R}^0$ indeed define 
  $L_\infty$-module morphisms. 
  
  Note that $\mathcal{R}^0$ is automatically a map of $L_\infty$-modules by Example~\ref{morphmodulemorph}, 
  but the same is not true of the $\partial_j$ since they do not preserve the $L_\infty$-algebra structure. Note that by
  definition of the maps $\partial_j$ we have that 
  $\partial_j(\gamma_1\vee\ldots\vee\gamma_n\otimes m) = \gamma_1\vee\ldots\vee\gamma_n\otimes \partial_jm$. 
  Thus the fact that the $\partial_j$ are maps of $L_\infty$-modules follows from the fact that 
  \begin{equation}\label{eq:partialphi}
  	\partial_j\varphi^j_k(\gamma_1\vee\ldots\vee\gamma_n\otimes m)
    =
    \varphi_k(\gamma_1\vee\ldots\vee\gamma_n\otimes \partial_jm).
  \end{equation}
  This is established by simply writing out the definitions of both sides. 
  
  \vspace{0.3cm}
  
  Similarly the map $\mathcal{A}^0$ yields a map of $L_\infty$-modules by applying 
  Example~\ref{morphmodulemorph} and Eq.~\eqref{eq:partialphi} holds with $\varphi$ 
  replaced by $\psi$. As with $\FormsT{}{M}{}$ we have $\partial_0\circ\mathcal{A}^0=0$ and 
  $\partial_{j+1}\circ\partial_j=0$ for all $j\geq 0$. 
\end{proof}
Replacing $M$ by $U_a$ with $a\in\mathcal{I}^k$ and $k\geq 0$ in the discussion of $\mathscr{U}$ preceding 
Lemma~\ref{lem:formality1} and inducing maps to the product (as done in the discussion after 
Lemma~\ref{lem:pilist}), we obtain the $L_\infty$-morphisms 
\begin{equation}
    \mathscr{U}^k
    \colon 
    \check{C}^k\left(\mathcal{I},\FormsT{}{M}{})\right)\longrightarrow \check{C}^k\left(\mathcal{I};\FormsD{}{M}{}\right)
\end{equation}
for all $k\geq 0$. Note that for each $k\geq 0$ this yields the commuting diagram 
\begin{equation}
    \begin{tikzcd}%[row sep=3em,column sep=4em]
      & \FormsT{}{M}{}
      \arrow[r, "\mathcal{R}^0"]\arrow[d, swap, "\mathscr{U}"] 
      & \check{C}^k\left(\mathcal{I};\FormsT{}{M}{}\right)
      \arrow[d,  "\mathscr{U}^k"]
      \\
      & \FormsD{}{M}{}\arrow[r, "\mathcal{A}^0"]
      & \check{C}^k\left(\mathcal{I};\FormsD{}{M}{}\right)
    \end{tikzcd}
\end{equation}
which allows us to realize the maps $\mathscr{U}^k$ as maps of $L_\infty$-modules over $\FormsT{}{M}{}$ by using 
Example~\ref{morphmodulemorph}. We denote these maps by $\mathscr{U}^k$ again since it should not cause any confusion. 
 As an immediate consequence we can prove the following
lemma.
\begin{lemma}
	\label{lem:morphresol}
	The maps $\mathscr{U}$ and $\mathscr{U}^\bullet$ form a morphism from the resolution $\mathcal{R}^0$ 
    to the resolution $\mathcal{A}^0$.
\end{lemma}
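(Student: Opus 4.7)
The plan is to verify that the maps assemble into a morphism of resolutions, which amounts to showing that $\mathscr{U}^0\circ\mathcal{R}^0=\mathcal{A}^0\circ\mathscr{U}$ and that $\mathscr{U}^{k+1}\circ\partial_k=\partial_k\circ\mathscr{U}^k$ for all $k\geq 0$, with these equalities holding as morphisms of $L_\infty$-modules over $\FormsT{}{M}{}$. Each individual map has already been constructed as an $L_\infty$-morphism in the preceding discussion, so only the commutativity of the squares and the module-compatibility of the resulting maps remain to be checked.

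For the leftmost square, both $\mathscr{U}^0\circ\mathcal{R}^0$ and $\mathcal{A}^0\circ\mathscr{U}$ are $L_\infty$-morphisms whose codomain is the product $\check{C}^0(\mathcal{I};\FormsD{}{M}{})$. By the universal property of products of $L_\infty$-algebras recorded after Definition~\ref{def:prod}, it suffices to check agreement after composing with each projection $\pr^a$ for $a\in\mathcal{I}$. Both compositions then collapse to the same map: first restrict to $U_a$, then apply the fiberwise version of $\mathscr{K}$ on $U_a$. These agree because $\mathscr{K}$ is defined pointwise on fibers and therefore commutes with restriction of the form-valued sections from $M$ to any open subset $U_a$.

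The remaining squares are handled by the same universal-property argument. By construction the \v{C}ech differentials $\partial_k$ have vanishing higher Taylor components and their first component is the usual signed sum of restrictions $U_a\supset U_b$. Each $\mathscr{U}^k$ is the product morphism assembled from the fiberwise $\mathscr{K}$ on each $U_a$ with $a\in\mathcal{I}^k$, so after projecting to any factor of $\check{C}^{k+1}(\mathcal{I};\FormsD{}{M}{})$ indexed by $b\in\mathcal{I}^{k+1}$, both $\mathscr{U}^{k+1}\circ\partial_k$ and $\partial_k\circ\mathscr{U}^k$ reduce to the same alternating sum of compositions of restrictions with the fiberwise $\mathscr{K}$. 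Equality then follows once more from the naturality of $\mathscr{K}$ with respect to these restrictions.

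To conclude, I would note that the $\FormsT{}{M}{}$-module structures on all objects involved are those induced from the $L_\infty$-morphisms $\mathcal{R}^k$ and $\mathcal{A}^k\circ\mathscr{U}$ via Example~\ref{morphmodule}, that the $\partial_k$ are module morphisms by the identity~\eqref{eq:partialphi} established in the proof of Lemma~\ref{lem:formality1}, and that Example~\ref{morphmodulemorph} applied to the commuting triangles above automatically recasts each $\mathscr{U}^k$ as a morphism of $\FormsT{}{M}{}$-modules. The only point requiring care—and the main potential obstacle—is to keep the universal-property argument honest at the level of full $L_\infty$-morphisms rather than of first Taylor components alone; but since the higher components of $\partial_k$ vanish and those of $\mathscr{U}^k$ are literally the products of those of the fiberwise $\mathscr{K}$, the bookkeeping is straightforward and Lemma~\ref{lem:morphresol} follows.
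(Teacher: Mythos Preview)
Your proposal is correct and follows the same approach as the paper, which treats the lemma as an immediate consequence of the preceding constructions and provides no further argument. You have simply spelled out what the paper leaves implicit: the commutativity of the squares follows from the universal property of the product together with the naturality of the fiberwise $\mathscr{K}$ under restriction, and the module-morphism status of each $\mathscr{U}^k$ is exactly what Example~\ref{morphmodulemorph} provides once the commuting diagram preceding the lemma is in place.
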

The next step consists in constructing a resolution adapted Maurer--Cartan element.
Let us consider the one-forms $A_{-1}$ and $A$ in $\Omega(M;\FibT{})$ introduced above
in order to construct the Fedosov differential as in \eqref{eq:fedosovD}.
% , a local frame $e_1, \dots, e_d$ and its dual frame
% $(x_1,\ldots, x_d)$, 
% $A_{-1}\in\FormsT{1}{V}{0}$ is the one-form
% $A_{-1}=\sum_{i=1}^dx_i\otimes e_i$.
%
\begin{lemma}
\label{lem:gMC}
	The element $\Gamma := A_{-1} + A\in \FormsT{1}{M}{0}$ is a Maurer--Cartan 
    element adapted to both $\mathcal{R}^0$ and $\mathcal{A}^0$. 
\end{lemma}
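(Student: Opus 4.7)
The plan is to establish two independent pieces: that $\Gamma$ satisfies the Maurer--Cartan equation in the curved Lie algebra $\FormsT{}{M}{}$, and that the induced cohomology complexes from the twist by $\Gamma$ of the resolutions $\mathcal{R}^0$ and $\mathcal{A}^0$ constructed in Lemma~\ref{lem:formality1} are acyclic.

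For the Maurer--Cartan part, I would directly expand $(\nabla - \delta)\Gamma + \tfrac{1}{2}\Schouten{\Gamma,\Gamma}$ using the inputs $\Schouten{A_{-1},A_{-1}} = 0$, $\nabla A_{-1} = 0$ (torsion-freeness of $\nabla$) and $\delta = \Schouten{A_{-1},\cdot}$. The cross term $\Schouten{A_{-1},A}$ appearing in the quadratic piece contributes exactly a $\delta A$ that cancels the $-\delta A$ coming from $-\delta(A_{-1}+A)$, leaving the expression $\nabla A + \tfrac{1}{2}\Schouten{A,A}$. Up to the sign convention for the curvature of the curved Lie algebra, this is the Fedosov identity obtained by requiring $D^2 = 0$ for $D = -\delta + \nabla + \Schouten{A,\cdot}$ used in Theorem~\ref{FedRes}, so $\Gamma$ is MC by the very construction of $A$.

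For adaptedness to $\mathcal{R}^0$, Lemma~\ref{lem:pilist} identifies the twisted differential $\varphi_0^\Gamma$ on $\FormsT{}{M}{}$ with the Fedosov operator $D$, and likewise the twisted differential $(\varphi^k)_0^\Gamma$ on $\check{C}^k(\mathcal{I};\FormsT{}{M}{}) = \prod_{a \in \mathcal{I}^k}\Omega(U_a;\FibT{})$ with $D$ applied factor-by-factor on each $\Omega(U_a;\FibT{})$. Applying Theorem~\ref{FedRes} chart by chart identifies the cohomology with $\Tpoly{}(U_a)$, and the induced complex becomes the augmented \v{C}ech complex
\begin{equation*}
0 \longrightarrow \Tpoly{}(M) \longrightarrow \check{C}^0(\mathcal{I};\Tpoly{}(M)) \longrightarrow \check{C}^1(\mathcal{I};\Tpoly{}(M)) \longrightarrow \cdots
\end{equation*}
which is acyclic since $\mathcal{I}$ is a good cover and $\Tpoly{}$ is a fine sheaf.

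For adaptedness to $\mathcal{A}^0$ the argument runs identically with $\FibT{}$ replaced by $\FibD{}$ and $\Tpoly{}$ by $\Dpoly{}$. The only extra input needed is that the MC element on the $\FibD{}$-side induced through the fiberwise Kontsevich morphism $\mathscr{U}$ is again $\Gamma$ (under the HKR embedding): by property \refitem{formalformality2} of $\mathscr{K}$ in Theorem~\ref{formalformality} all Taylor components $\mathscr{U}_n(\Gamma^{\vee n})$ vanish for $n \geq 2$, because $\Gamma$ takes values in degree-$0$ fiberwise vector fields, so by Lemma~\ref{lem:twistprop}\refitem{lem:twistprop!1} one gets $\Gamma_\mathscr{U} = \mathscr{U}_1(\Gamma) = \Gamma$, and the twisted module differentials become the Fedosov operator $\partial + D$ componentwise. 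I expect the only nontrivial obstacle to be the sign and degree bookkeeping in the MC expansion and in unwinding the twisted module differentials via Example~\ref{morphmodule}; once that is done, both claims reduce to Theorem~\ref{FedRes} applied on each chart together with the standard acyclicity of \v{C}ech resolutions for good covers.
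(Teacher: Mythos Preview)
Your proposal is correct and follows essentially the same route as the paper: both reduce the MC equation to the Fedosov identity $D^2=0$, and both establish adaptedness by recognizing the twisted induced complex as the augmented \v{C}ech complex of the fine sheaves $\Tpoly{}$ (resp.\ $\Dpoly{}$) over a good cover. Your write-up is in fact more explicit than the paper's on two points---you spell out the local application of Theorem~\ref{FedRes} to identify the cohomology of each factor, and you isolate the extra ingredient $\Gamma_{\mathscr{U}}=\Gamma$ needed on the $\FibD{}$-side---whereas the paper emphasizes instead the single observation $\partial_j^\pi=\partial_j$ (which you use implicitly when asserting that the induced maps are the \v{C}ech differentials); making that step explicit would close the only small gap in your outline.
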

\begin{proof}
  First of all we note that the filtration appearing in the definition of a Maurer--Cartan 
  element is given here by the exterior degree. 
  Consider the Eq.~\eqref{eq:fedosovD} and note that the condition $D^2=0$ says that 
  \begin{equation*}
  	\conn^2 + \Schouten{\nabla\Gamma,\argument} + \frac{1}{2}\Schouten{\Schouten{\Gamma,\Gamma},\argument}
    =
    0,
  \end{equation*} 
  which (together with $\delta^{-1}A=0$) implies that
  \begin{equation*}
      R+\conn\Gamma +\frac{1}{2}\Schouten{\Gamma,\Gamma}
      =
      0,
  \end{equation*}
  where we have denoted the curvature of $\conn$ by $R$, i.e. $R\in\FormsT{2}{M}{}$ is 
  defined by $\conn^2\alpha=\Schouten{R,\alpha}$.  
  This shows that $\Gamma$ is an MC-element and it is left to show that it is 
  adapted to both $\mathcal{R}^0$ and $\mathcal{A}^0$. 

  Suppose we have that $\partial_j^\pi=\partial_j$ for all $j\geq 0$. Then $\pi$ would obviously be adapted to both $\mathcal{R}^0$ and 
  $\mathcal{A}^0$, since the twisted resolution would simply be
  the \v{C}ech complex of polyvector fields and 
  polydifferential operators respectively. These two sheaves 
  are fine, thus the corresponding \v{C}ech complexes on 
  a good cover are acyclic. Now simply note that indeed 
  $\partial_j^\pi=\partial_j$ for all $j\geq 0$ since the 
  Taylor coefficients of these maps vanish except in the lowest 
  order and we may use the formula from the proof of Lemma~\ref{lem:pilist}.
\end{proof}
Finally, using the techniques introduced above we can give another proof of
the formality theorem that we state below.
\begin{theorem}\label{Mformality}
	The dgla's $\Tpoly{}(M)$ and $\Dpoly{}(M)$ are quasi-isomorphic.
\end{theorem}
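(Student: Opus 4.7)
The plan is to combine Theorem~\ref{FedRes} with Proposition~\ref{prop:key} applied to the morphism of resolutions from Lemma~\ref{lem:morphresol}, twisted by the Maurer--Cartan element $\Gamma = A_{-1}+A$ of Lemma~\ref{lem:gMC}. Explicitly, I aim to produce the zig-zag
\begin{equation*}
  \Tpoly{}(M)
  \stackrel{\lambda_T}{\longrightarrow}
  \left(\FormsT{}{M}{},D\right)
  \stackrel{\mathscr{U}^\Gamma}{\longrightarrow}
  \left(\FormsD{}{M}{},\partial+D\right)
  \stackrel{\lambda_D}{\longleftarrow}
  \Dpoly{}(M)
\end{equation*}
of $L_\infty$-quasi-isomorphisms, from which the theorem follows. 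The outer two arrows are quasi-isomorphisms by Theorem~\ref{FedRes}, so the entire content is the middle arrow. Note that since $\Gamma\in \mathrm{MC}(\mathcal{R}^0)\cap\mathrm{MC}(\mathcal{A}^0)$, twisting the curved Lie algebra structures $(\FormsT{}{M}{},\conn-\delta,\Schouten{\argument,\argument})$ and $(\FormsD{}{M}{},\partial+\conn-\delta,[\argument,\argument]_G)$ by $\Gamma$ produces exactly the flat dglas $(\FormsT{}{M}{},D)$ and $(\FormsD{}{M}{},\partial+D)$, and Property~\refitem{formalformality3} of Theorem~\ref{formalformality} gives $\Gamma_\mathscr{U}=\Gamma$, so $\mathscr{U}^\Gamma$ is indeed a map between these two dglas.

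To apply Proposition~\ref{prop:key} to conclude that $\mathscr{U}^\Gamma$ is an $L_\infty$-quasi-isomorphism, I must verify that each $(\mathscr{U}^k)^\Gamma$ is an $L_\infty$-quasi-isomorphism. By Lemma~\ref{lem:pilist}\refitem{lem:twistprop!2}, the twist of the product morphism $\mathscr{U}^k$ by the MC element $\Gamma$ (viewed as the collection $(\Gamma|_{U_a})_{a\in\mathcal{I}^k}$) is the product of local twists $(\mathscr{U}|_{U_a})^{\Gamma|_{U_a}}$, so it suffices to verify the local statement: for every coordinate chart $U_a$ of the good cover, the morphism $(\mathscr{U}|_{U_a})^{\Gamma|_{U_a}}$ is a quasi-isomorphism.

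This local check is the crux of the argument and is essentially Example~\ref{formalityRd} transplanted to $U_a$. The point is that on the contractible chart $U_a$, the fiberwise application $\mathscr{U}|_{U_a}$ of Kontsevich's morphism is itself a filtration-respecting $L_\infty$-quasi-isomorphism between the curved algebras (before twisting), as one sees by the spectral-sequence argument of Example~\ref{formalityRd} based on the exterior-degree filtration, whose $E_1$ page is the fiberwise application of $\mathscr{K}$ and hence already a quasi-isomorphism by Theorem~\ref{formalformality}. Combining this with Lemma~\ref{lem:twistprop}\refitem{lem:twistprop!4} applied to $\Gamma|_{U_a}\in\mathcal{F}^1\Omega(U_a;\FibT{})[1]^0$, which is still a Maurer--Cartan element by restriction, yields that the twisted morphism $(\mathscr{U}|_{U_a})^{\Gamma|_{U_a}}$ is an $L_\infty$-quasi-isomorphism as well.

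The main obstacle is precisely this local verification: one must confirm that the exterior-degree filtration makes $\mathscr{U}|_{U_a}$ a filtered $L_\infty$-quasi-isomorphism even though the curved algebra has a non-trivial curvature coming from $\conn$, so that Lemma~\ref{lem:twistprop}\refitem{lem:twistprop!4} is genuinely applicable. Once this is in place, putting the pieces together is mechanical: Proposition~\ref{prop:key} upgrades local quasi-isomorphisms of the twisted resolutions to a global quasi-isomorphism $\mathscr{U}^\Gamma\colon(\FormsT{}{M}{},D)\to(\FormsD{}{M}{},\partial+D)$, and the Fedosov quasi-isomorphisms of Theorem~\ref{FedRes} close the zig-zag, completing the proof.
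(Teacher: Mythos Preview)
Your overall architecture matches the paper's proof exactly: zig-zag via Fedosov resolutions, Proposition~\ref{prop:key} applied to the morphism of resolutions from Lemma~\ref{lem:morphresol}, reduction via Lemma~\ref{lem:pilist} to the local statement on each $U_a$. The paper also proceeds this way. However, your local verification has a genuine gap, and the paper resolves it by a trick you did not find.

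You propose to show that $\mathscr{U}|_{U_a}$ is a ``filtration-respecting $L_\infty$-quasi-isomorphism between the curved algebras'' and then invoke Lemma~\ref{lem:twistprop}\refitem{lem:twistprop!4}. But quasi-isomorphism is only defined when $Q_0=0$, and on $U_a$ the restricted structure $(\FormsT{}{U_a}{},\conn-\delta,\Schouten{\argument,\argument})$ still carries the curvature $R$ of $\conn$; the spectral-sequence argument of Example~\ref{formalityRd} you cite was carried out for the \emph{flat} differential $\D$, not for $\conn-\delta$. You correctly flag this as ``the main obstacle'' in your last paragraph, but you do not actually overcome it, so Lemma~\ref{lem:twistprop}\refitem{lem:twistprop!4} is not yet applicable as stated.

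The paper's fix is this: on the chart $U_a$ write $\conn=\D+\Schouten{B_a,\argument}$ for some $B_a\in\FormsT{1}{U_a}{0}$. Then $\mathscr{U}\colon(\FormsT{}{U_a}{},\D)\to(\FormsD{}{U_a}{},\partial+\D)$ is a morphism of \emph{flat} dgla's and is a quasi-isomorphism exactly by the argument of Example~\ref{formalityRd}. Now twist by the MC element $B_a+\Gamma$ of this flat algebra; property~\refitem{formalformality2} of Theorem~\ref{formalformality} (vector-field inputs kill higher Taylor coefficients) gives $\mathscr{U}^{B_a+\Gamma}=\mathscr{U}^\Gamma$, and Lemma~\ref{lem:twistprop}\refitem{lem:twistprop!4} then legitimately yields that $\mathscr{U}^\Gamma$ is a quasi-isomorphism on $U_a$. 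In other words, the curvature is absorbed into the twist rather than into the untwisted structure, so one never needs to speak of quasi-isomorphisms of curved algebras. (Minor point: your claim $\Gamma_{\mathscr{U}}=\Gamma$ follows from property~\refitem{formalformality2}, not~\refitem{formalformality3}.)
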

\begin{proof}
	We prove that the $L_\infty$-morphisms
    \begin{equation}
    	\label{seqquisM}
  		(\Tpoly{}(M),0)\stackrel{\lambda_T}{\longrightarrow}
  		(\Omega(M;\FibT{}),D)\stackrel{\mathscr{U}^\Gamma}{\longrightarrow}
  		(\Omega(M,\FibD{}),\partial+D)\stackrel{\lambda_D}{\longleftarrow}
  		(\Dpoly{}(M),\partial)
	\end{equation}
	are all quasi-isomorphisms.  Then, in order to obtain the
    quasi-isomorphism $\Tpoly{}(M)\longrightarrow \Dpoly{}(M)$
	we only need to invert the final arrow in  \eqref{seqquisM}.
    This is done in the same way as in Example \ref{formalityRd} (see
    \cite[section 4.2]{Dolgushev2005a}). 
    From Prop~\ref{lem:morphresol}, $\mathscr{U}$ is an $L_\infty$-morphism of resolutions. 
    Thus, using Lemma~\ref{lem:gMC}, Proposition~\ref{prop:key}, Lemma~\ref{lem:pilist} and 
    Example~\ref{morphmodulemorph} it is enough to show that 
    $\mathscr{U}^\Gamma$ is a quasi-isomorphism on the $U_a$ 
    for $a\in \mathcal{J}^k$ and $k\geq 1$. To show this we note first that 
    \begin{equation*}
    	\mathscr{U}
        \colon 
        \left(\FormsT{}{U_a}{}, \D, \Schouten{\argument,\argument}\right)
        \longrightarrow 
        \left(\FormsD{}{U_a}{}, \D, [\argument,\argument]_G\right)
    \end{equation*}
    is a well-defined $L_\infty$-quasi-isomorphism. On $U_a$ we have the decomposition 
    $\conn=\D+\Schouten{B_a,\argument}$ for some $B_a\in\FormsT{}{U_a}{}$. 
    By Theorem~\ref{formalformality} it follows that 
    \begin{equation*}
    	\mathscr{U}^{B_a+\Gamma}
        =
        \mathscr{U}^\Gamma\colon \left(\FormsT{}{U_a}{}, D, \Schouten{\argument,\argument}\right)
        \longrightarrow \left(\FormsD{}{U_a}{}, D, [\argument, \argument]_G\right).
    \end{equation*}
    Now since $\mathscr{U}$ was a quasi-isomorphism this proves that $\mathscr{U}^\Gamma$ 
    is a quasi-isomorphism, since $B_a+\Gamma$ is an MC-element (see \cite{Dolgushev2005}). 
\end{proof}
\begin{remark}[Formality for Lie algebroids]
	Formality for Lie algebroids has been proved in \cite{Calaque2005}
    and also uses the twisting procedure. We here remark that the techniques
    discussed above also apply to formality for Lie algebroids. Similarly,
    the authors conjecture that this observation immediately extend to the result
    presented in \cite{Chemla}.
\end{remark}

%%%%%%%%%%%%%%%%%%%%%%%%%%%%%%%%%%%%%%%%%%%%%%%%%%%%%%%%%%%%%%%%%%%%%%%%%%%%%%%%%%%%%%%%%%%%%%%%%%%%%%%%%%%%%%
%
\subsubsection{Formality for Hochschild chains}

Formality for Hochschild chains has been conjectured in \cite{Tsygan99}
and proved by Dolgushev in \cite{Dolgushev2006} by using the globalization
techniques proposed in \cite{Dolgushev2005,Dolgushev2005a} and the local 
formality for Hochschild chains proved by Shoikhet in \cite{Shoikhet03}.
Here we briefly recall the Fedosov resolutions proved in \cite[section 4]{Dolgushev2006}. 
In the previous we were concerned with the analog $\Dpoly{}(M)$ of Hochschild cochains on $\Cinfty(M)$. 
Similarly we consider the analog $\Cpoly{}(M)$ of Hochschild chains on $M$ given by 
\begin{equation}
	 \Cpoly{-n}(M)= \Cinfty(M^{n+1}),
    \qquad
    \Cpoly{0} (M) = \Cinfty(M),
\end{equation}
where $M^{n+1}$ denotes the $n+1$-fold Cartesian product of $M$ with itself. 
The space $\Cpoly{}(M)$ can be naturally endowed with a structure of graded module over
the Lie algebra $\Dpoly{}(M)$ and we denote the corresponding action by $\rho$. The multiplication
$\mu$ in the algebra $\Cinfty(M)$ induces a differential $\mathrm{b}$ on $\Cpoly{}(M)$
by
\begin{equation*}
	\mathrm{b} := \rho_\mu 
    \colon 
    \Cpoly{n} (M) \longrightarrow \Cpoly{n+1}(M).
\end{equation*}
It is easy to see that $(\Cpoly{}(M), \mathrm{b})$ is a dg module over the dgla $\Dpoly{}(M)$
(see \cite[section 3]{Dolgushev2006}). 
Its cohomology is isomorphic, as a vector space, to the space $A^\bullet (M)$ of forms on $M$ with an inverted
grading, as proved by Teleman in \cite{Teleman98}. The dg $\Dpoly{}(M)$-module structure 
on $\Cpoly{}(M)$ induces a dg $\Tpoly{}(M)$-module structure on $A^\bullet (M)$, with action denoted 
by $\lambda$ (defined by the action of a polyvevtor field on exterior forms via the Lie derivative). 
The first step to find Fedosov resolutions for $\Cpoly{}(M)$ and $A^\bullet (M)$
consists in a local statement. Note that composing 
the quis $\mathscr{U}$ discussed in Example~\ref{formalityRd} with the action $\rho$
we obtain an $L_\infty$-module structure $\psi$ on $\Cpoly{}(\R^d))$ over 
the dgla $\Tpoly{}(\R^d)$.
\begin{theorem}[Shoikhet, \cite{Shoikhet03}]
   \label{thm:shoikhet}
 	There exists a quis
    \begin{equation*}
	  	\mathscr{S}
        \colon 
        (\Cpoly{}(\mathbb{R}^d), \mathrm{b}) \longrightarrow (A^\bullet (\mathbb{R}^d), 0 )
    \end{equation*}
    of $L_\infty$-modules over $\Tpoly{}(\mathbb{R}^d)$, with actions given by $\psi$ and $\lambda$ respectively
    and where $\mathscr{S}_0$ is given by Teleman's theorem
    and satisfying the same properties of 
    Theorem~\ref{formalformality}
\end{theorem}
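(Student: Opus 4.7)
The plan is to construct $\mathscr{S}$ explicitly by adapting Kontsevich's configuration space approach to the chain setting, which is Shoikhet's strategy. First I would define the leading component $\mathscr{S}_0$ as the formal fibrewise version of the Hochschild--Kostant--Rosenberg map, namely the inverse to the symmetrisation/shuffle map identifying Hochschild chains with forms; its being a quasi-isomorphism at the cohomological level is precisely Teleman's theorem. The higher Taylor components $\mathscr{S}_n\colon \bigvee^n \Tpoly{}(\mathbb{R}^d)[1]\otimes \Cpoly{}(\mathbb{R}^d)\to A^\bullet(\mathbb{R}^d)$ would then be defined as sums over admissible oriented graphs, whose weights are integrals of products of Kontsevich-type angle forms over the relevant compactified configuration spaces of points in the closed unit disk $D\subset\mathbb{C}$. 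One distinguishes a central interior point (for the chain input) together with $n$ further interior points (for the polyvector field inputs) and a cyclically ordered family of boundary points (for the tensor factors of the Hochschild chain).

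Next I would verify the $L_\infty$-module morphism identity $\mathscr{S}\psi=\lambda\mathscr{S}$ by applying Stokes' theorem on the Fulton--MacPherson style compactification of these configuration spaces. The codimension-one boundary decomposes into strata indexed by collapsing subconfigurations, and I would match the surviving ones to: (a) subsets of interior points collapsing away from the central point, which produce the terms where a Schouten bracket on $\Tpoly{}(\mathbb{R}^d)$ is applied first and $\mathscr{S}$ second; (b) subsets of interior points colliding with the central point, which reproduce $\mathscr{S}\circ\psi$ through the interplay between the central propagator, $\mathscr{U}$ and the action $\rho$; (c) subsets of boundary points colliding cyclically, which reproduce the Hochschild differential $\mathrm{b}$; and (d) full degeneration onto the boundary circle, matched against the vanishing differential on $A^\bullet(\mathbb{R}^d)$. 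All remaining strata vanish by dimension counting or cancel in pairs. The properties analogous to \refitem{formalformality1}--\refitem{formalformality3} of Theorem~\ref{formalformality} would then follow from $\group{GL}(d,\mathbb{R})$-equivariance of the chosen angle forms, a degree count for vector field inputs, and explicit verification on $\mathfrak{gl}(d,\mathbb{R})$-generators.

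Finally, that $\mathscr{S}$ is a quasi-isomorphism reduces, via the usual spectral sequence in the exterior/arity filtration (as in Example~\ref{formalityRd}), to the fact that $\mathscr{S}_0$ is, which is Teleman's theorem. The main obstacle I expect is the boundary analysis: the central interior point breaks the symmetry available in the cochain case, so the propagator at the central point must be chosen very carefully in order that the cyclic ordering of boundary points is compatible both with $\mathrm{b}$ and with the action $\psi$. A secondary difficulty is matching stratum (b) with the definition $\psi = \rho\circ\mathscr{U}$ on the nose: one needs the angle form at the central point to restrict, on the stratum where interior points collide there, to exactly the Kontsevich weights entering $\mathscr{U}$, so that Stokes' theorem delivers the module morphism identity strictly and not merely up to homotopy.
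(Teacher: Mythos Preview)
The paper does not prove this theorem; it is quoted from Shoikhet's work \cite{Shoikhet03} and used as a black-box input, exactly as Kontsevich's Theorem~\ref{formalformality} is. So there is no ``paper's own proof'' to compare against.

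That said, what you have sketched is essentially Shoikhet's original argument: configuration-space integrals on the closed disk with a distinguished central point, Stokes' theorem on the Fulton--MacPherson compactification to derive the $L_\infty$-module relations, and the HKR/Teleman map as the zeroth component. Your identification of the boundary strata with the relevant algebraic operations (Schouten bracket, the action $\psi=\rho\circ\mathscr{U}$, the Hochschild differential $\mathrm{b}$) is correct in outline, and you have correctly flagged the genuine technical difficulty, namely the choice of propagator at the central point so that stratum (b) reproduces $\psi$ on the nose rather than up to homotopy. One small correction: the target $(A^\bullet(\mathbb{R}^d),0)$ has zero differential, so stratum (d) must vanish identically rather than ``match against'' anything; in Shoikhet's setup this is arranged by the specific form of the angle function at the boundary circle. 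Otherwise your plan is sound and aligns with the cited reference.
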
 
Let us denote the bundle of \emph{formal fiberwise Hochschild chains} whose fibers
are dg $\Dpoly{}(\mathbb{R}^d)$-modules by $\FibC{}$. Similarly, we consider the bundle $\mathcal{E}$ of 
\emph{formal fiberwise exterior forms}, i.e. exterior 
forms with values in the bundle of the formally completed symmetric algebra $\SM$  of $T^* M$.
Clearly $(\Omega(M;\mathcal{E}), 0)$ and $(\Omega(M;\FibC{}), \mathrm{b})$ are fiberwise dgla modules
over $\Omega(M;\FibT{})$ and $\Omega(M;\FibD{})$, respectively. We denote the fiberwise Lie derivative 
on $\Omega(M;\mathcal{E})$ and the fiberwise action of $\Omega(M;\FibD{})$ on $\Omega(M;\FibC{})$
again by $\lambda$ and $\rho$, resp. Also, the differential on $\Omega(M;\FibC{})$ can be written as 
$\mathrm{b} = \rho_\mu $ with $\mu \in \FibD{1}$.
Finally, in complete analogy with the above discussion,
one obtains the following statement.
\begin{theorem}[Fedosov Resolutions, Dolgushev \cite{Dolgushev2006}]
  \label{FedRes}
 There exist quasi-isomorphisms of dgla modules
  \begin{equation*}
    \lambda_A
    \colon 
    \left(A^\bullet(M), 0 \right)\longrightarrow			\left(\Omega(M;\mathcal{E}), D\right)
    \quad\text{and}\quad
    \lambda_C
    \colon 
    \left(\Cpoly{}(M), \rho_{\mu} \right)\longrightarrow \left(\FormsC{}{M}{},D+ \rho_\mu \right),
  \end{equation*} 
  where $D$ denotes the Fedosov differential.
\end{theorem}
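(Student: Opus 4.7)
The plan is to mirror the Fedosov construction of the cochain resolutions $\lambda_T$ and $\lambda_D$ recalled above, but now in the module setting of exterior forms and Hochschild chains. The key tool is the standard Fedosov contracting homotopy $\delta^{-1}$ for the fiberwise differential $\delta=\Schouten{A_{-1},\argument}$, which raises fiberwise degree by one and, together with a fiberwise normalization, singles out a unique $D$-horizontal lift of any base section.

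For $\lambda_A$, given $a\in A^\bullet(M)$ I would define $\lambda_A(a)\in \Omega(M;\mathcal{E})$ as the unique solution of the fixed-point equation
\begin{equation*}
  \lambda_A(a)=a+\delta^{-1}\bigl(\conn\lambda_A(a)+\mathcal{L}_A\lambda_A(a)\bigr),
\end{equation*}
where $\mathcal{L}_A$ denotes the fiberwise Lie derivative by the Fedosov one-form $A$. Convergence in the fiberwise-degree filtration is guaranteed by its completeness together with the fact that $\delta^{-1}$ strictly raises fiberwise degree. The construction of $\lambda_C$ proceeds identically, with the module action $\rho$ in place of the Lie derivative and with the chain differential $\rho_\mu$ carried along on the target side. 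That these maps intertwine source and target differentials is immediate from horizontality together with $D^2=0$ (and, for the chain case, $\rho_\mu^2=0$ together with compatibility of $\rho$ with the Gerstenhaber bracket). Module-compatibility under $\lambda_T$ and $\lambda_D$ follows from uniqueness of $D$-horizontal lifts: both $\lambda_A(\mathcal{L}_X a)$ and $\mathcal{L}_{\lambda_T(X)}\lambda_A(a)$ lift the same base section and must therefore coincide, and the analogous statement holds on the chain side with $\rho$ and $\lambda_D$.

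Finally, to prove the quasi-isomorphism property I would run the Fedosov spectral sequence on the fiberwise filtration: the $E_0$-differential is $-\delta$, whose cohomology by the Poincar\'e lemma for $\delta$ is concentrated in fiberwise degree zero and there recovers $A^\bullet(M)$ (resp.\ $\Cpoly{}(M)$ with the $\rho_\mu$-differential, by Teleman's theorem). By construction $\lambda_A$ and $\lambda_C$ induce the identity on $E_1$, and convergence follows from completeness of the filtration. The main obstacle I anticipate is the module-compatibility check for $\lambda_C$, since the fiberwise chain differential $\rho_\mu$ interacts nontrivially with the Fedosov recursion and with the $\Omega(M;\FibD{})$-action; this is essentially the technical content of Section~4 of Dolgushev~\cite{Dolgushev2006}, to which we refer for the detailed verification.
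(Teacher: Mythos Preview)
The paper does not actually prove this theorem: it is stated as a result of Dolgushev and simply cited, with no proof environment following it. So there is no ``paper's own proof'' to compare against here. Your sketch is essentially an outline of Dolgushev's original argument in \cite{Dolgushev2006}, and as such it is the correct approach: iterate the Fedosov contracting homotopy $\delta^{-1}$ to build the horizontal lift, use uniqueness of $D$-flat sections with prescribed fiberwise-degree-zero part to get module-compatibility, and run the fiberwise-degree spectral sequence to conclude that the lifts are quasi-isomorphisms.

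One small correction: your invocation of Teleman's theorem in the spectral-sequence step is misplaced. At the $E_1$-page the fiberwise Poincar\'e lemma for $\delta$ already gives you $A^\bullet(M)$ (with zero differential) and $\Cpoly{}(M)$ (with the differential $\rho_\mu$) directly; Teleman's theorem computes the cohomology of the latter complex and is not what produces the identification on $E_1$. Apart from that, your outline is sound and matches the construction the paper is quoting.
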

Composing the fiberwise quis $\mathscr{U}\colon (\Omega(M;\FibT{}),0)\longrightarrow 
(\Omega(M,\FibD{}),\partial)$ with the fiberwise action $\rho$ of $\Omega(M,\FibD{})$
on $\Omega(M,\FibC{})$ we obtain an $L_\infty$-module structure on $\Omega(M,\FibC{})$ 
over $\Omega(M,\FibT{})$, also denoted by $\psi$. 
Moreover, from Theorem~\ref{thm:shoikhet} we obtain a fiberwise quis, denoted by $\mathscr{V}$ between the 
modules $(\Omega(M,\FibC{}), \mathrm{b}, \psi)$ and $(\Omega(M;\mathcal{E}), 0, \lambda)$. Thus 
we find that $\mathscr{V}$ is a morphism of the 
$L_\infty$-modules $(\Omega(M,\FibC{}),\mathrm{b})$ and $(\Omega(M;\mathcal{E}),0) $
over $(\Omega(M;\FibT{}),0, \Schouten{\argument, \argument})$. 
It is easy to observe that, in analogy with last section, 
the sequence 
\begin{equation}
	0\rightarrow
    \Omega(M;\FibC{})\stackrel{\mathcal{R}^0}{\longrightarrow} 
    \check{C}^0(\mathcal{J};\Omega(M;\FibC{}))\stackrel{\partial_0}{\longrightarrow}\check{C}^1(\mathcal{J};\Omega(M;\FibC{}))
    \stackrel{\partial_1}{\longrightarrow}\ldots
\end{equation}
forms a resolution of $\Omega(M;\FibC{})$. Similarly, there is a resolution of $\Omega(M;\mathcal{E}) $.
Using the same argument as for the morphism $\mathscr{U}$ we obtain:
\begin{lemma}
	 $\mathscr{V} \colon \Omega(M,\FibC{}) \longrightarrow \Omega(M;\mathcal{E})$ is a
     morphism of resolutions.
\end{lemma}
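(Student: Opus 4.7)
The plan is to adapt the argument used for Lemma~\ref{lem:morphresol} (the cochain case) to the present setting of chains. Since $\mathscr{V}$ has already been established as an $L_\infty$-module morphism from $(\Omega(M;\FibC{}),\mathrm{b})$ to $(\Omega(M;\mathcal{E}),0)$ over $(\Omega(M;\FibT{}),0,\Schouten{\argument,\argument})$, what remains is to construct corresponding maps $\mathscr{V}^k$ on the \v{C}ech components and to verify that they assemble, together with $\mathscr{V}$, into a commuting ladder with the restriction maps.

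First, for each $a\in\mathcal{I}^k$ and $k\geq 0$ I would replace $M$ by $U_a$ in the construction of $\mathscr{V}$ and apply Theorem~\ref{thm:shoikhet} fiberwise on $U_a$. Combined with the fiberwise action $\rho$ of $\FibD{}$ on $\FibC{}$ and Example~\ref{morphmodulemorph}, this produces an $L_\infty$-morphism
\begin{equation*}
    \mathscr{V}^a\colon \Omega(U_a;\FibC{})\longrightarrow \Omega(U_a;\mathcal{E})
\end{equation*}
of $L_\infty$-modules over $\Omega(U_a;\FibT{})$. Using the universal property of the product of $L_\infty$-algebras (Definition~\ref{def:prod}), as in the construction of the maps $\mathscr{U}^k$ preceding Lemma~\ref{lem:morphresol}, these assemble into $L_\infty$-morphisms
\begin{equation*}
    \mathscr{V}^k\colon \check{C}^k(\mathcal{I};\Omega(M;\FibC{}))\longrightarrow \check{C}^k(\mathcal{I};\Omega(M;\mathcal{E}))
\end{equation*}
of $L_\infty$-modules over $\Omega(M;\FibT{})$, where the module structures on the \v{C}ech components are induced by the restriction maps $\mathcal{R}^a$ together with the fiberwise Shoikhet map, again via Example~\ref{morphmodulemorph}.

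Next, I would verify that for each $k\geq 0$ the square with horizontal arrows the restriction maps into the degree-$k$ \v{C}ech components and vertical arrows $\mathscr{V}$, $\mathscr{V}^k$ commutes; this is immediate from the fact that restricting the fiberwise application of $\mathscr{S}$ to an open $U_a$ coincides with the fiberwise application of $\mathscr{S}$ on $U_a$. Compatibility of $\mathscr{V}^\bullet$ with the \v{C}ech differentials $\partial_i$ then follows exactly as in the proof of Lemma~\ref{lem:formality1}: the $\partial_i$ have only a zeroth Taylor coefficient acting on the module factor, so the required identity reduces to the naturality of the Shoikhet map under restriction.

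The main obstacle here, as in the cochain case, is conceptual rather than computational: one must make sense of $\mathscr{V}$ and the $\mathscr{V}^k$ as morphisms of $L_\infty$-modules over the curved algebra $\Omega(M;\FibT{})$ rather than merely fiberwise. This is precisely what the combination of Example~\ref{morphmodulemorph}, the product construction of Definition~\ref{def:prod}, and the naturality of Shoikhet's map provides, so the argument goes through mutatis mutandis from the cochain case and no additional ingredient is needed.
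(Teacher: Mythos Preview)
Your proposal is correct and matches the paper's approach exactly: the paper gives no explicit proof of this lemma, stating only that it follows ``using the same argument as for the morphism $\mathscr{U}$'', and you have faithfully spelled out what that argument is in the chain setting. The only minor remark is that in the last paragraph you refer to $\Omega(M;\FibT{})$ as the ``curved algebra'', whereas in the chain discussion the paper works over the flat structure $(\Omega(M;\FibT{}),0,\Schouten{\argument,\argument})$; this does not affect the argument, since the resolution and twisting framework is set up to handle both cases uniformly.
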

Finally, as an immediate consequence of the above lemma we can prove formality 
for Hochschild chains as follows.
\begin{theorem}
	The dg modules $(\Tpoly{}(M), A^\bullet (M))$ and $(\Dpoly{}(M), \Cpoly{}(M))$
    are quasi-isomorphic.
\end{theorem}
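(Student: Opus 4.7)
The proof proceeds in complete parallel with that of Theorem~\ref{Mformality}, with $\mathscr{U}$ replaced by $\mathscr{V}$, cochains replaced by chains and exterior forms playing the role of polyvector fields. The target is to exhibit the chain of $L_\infty$-quasi-isomorphisms of modules
\begin{equation*}
  (\Cpoly{}(M),\rho_\mu)\stackrel{\lambda_C}{\longrightarrow}(\FormsC{}{M}{},D+\rho_\mu)\stackrel{\mathscr{V}^\Gamma}{\longrightarrow}(\FormsE{}{M}{},D)\stackrel{\lambda_A}{\longleftarrow}(A^\bullet(M),0),
\end{equation*}
compatible with the chain of $L_\infty$-algebra quasi-isomorphisms from the proof of Theorem~\ref{Mformality}, where $\Gamma=A_{-1}+A$ is the very same MC element as in Lemma~\ref{lem:gMC}.

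First I would verify the module analog of Lemma~\ref{lem:gMC}, namely that $\Gamma$ is resolution adapted to both the \v{C}ech resolution $\mathcal{R}^0$ of $\FormsC{}{M}{}$ and the corresponding \v{C}ech resolution of $\FormsE{}{M}{}$. The argument is identical to the cochain case: the twisted \v{C}ech differentials $\partial_j^\Gamma$ coincide with the untwisted $\partial_j$ since only their zeroth Taylor coefficients are nonzero (using the formula from the proof of Lemma~\ref{lem:pilist}), and the \v{C}ech complexes of the fine sheaves of fiberwise chains and fiberwise exterior forms on a good cover are acyclic.

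Next I would apply Proposition~\ref{prop:key} to the morphism of resolutions obtained exactly as in Lemma~\ref{lem:morphresol} by applying Example~\ref{morphmodulemorph} to the commuting square of $\mathcal{R}^0$, $\mathcal{A}^0$, $\mathscr{V}$ and $\mathscr{V}^\bullet$. By that proposition it suffices to check that each local twist $(\mathscr{V}^k)^\Gamma$ restricted to $U_a$ is a quasi-isomorphism of $L_\infty$-modules for $a\in \mathcal{J}^k$ and $k\geq 0$. As in the proof of Theorem~\ref{Mformality}, decomposing $\conn=\D+\Schouten{B_a,\argument}$ on $U_a$ one has $B_a+\Gamma\in\mathrm{MC}$ and $\mathscr{V}^{B_a+\Gamma}=\mathscr{V}^\Gamma$; Shoikhet's Theorem~\ref{thm:shoikhet} applied fiberwise then yields that
\begin{equation*}
  \mathscr{V}\colon (\FormsC{}{U_a}{},\D,\psi)\longrightarrow (\FormsE{}{U_a}{},0,\lambda)
\end{equation*}
is already a quasi-isomorphism of $L_\infty$-modules over $(\FormsT{}{U_a}{},\D,\Schouten{\argument,\argument})$, so that its twist by $B_a+\Gamma$ is again a quasi-isomorphism by the $L_\infty$-module version of Lemma~\ref{lem:twistprop}\refitem{lem:twistprop!4}.

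With $\mathscr{V}^\Gamma$ established as an $L_\infty$-quasi-isomorphism of modules over the algebra quasi-isomorphism $\mathscr{U}^\Gamma$, the Fedosov resolutions $\lambda_C$ and $\lambda_A$ from Theorem~\ref{FedRes} complete the displayed chain. It remains only to invert $\lambda_A$, which proceeds exactly as in Example~\ref{formalityRd} and \cite[Section~4]{Dolgushev2006}: $\lambda_A$ identifies $A^\bullet(M)$ with the $D$-kernel in exterior degree zero, and the composite $\mathscr{V}^\Gamma\circ\lambda_C$ can be homotoped into this kernel. The main technical burden is the bookkeeping of the various twists, ensuring that the $L_\infty$-module and $L_\infty$-algebra twists intertwine correctly (as in Example~\ref{morphmoduletwist}) so that Prop.~\ref{prop:key} truly applies; this is routine once the module analogs of Lemma~\ref{lem:pilist} and Remark~\ref{rem:comptwist} are spelled out, but is the step where most care is required.
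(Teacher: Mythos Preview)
Your proposal is correct and follows essentially the same approach as the paper's own proof, which is deliberately terse and simply refers back to Theorem~\ref{Mformality} while noting that twisting $\mathscr{V}$ by $\Gamma$ via Proposition~\ref{prop:key} yields the desired quasi-isomorphism $\mathscr{V}^\Gamma$. You have in fact spelled out the parallel argument in more detail than the paper does, including the local verification on $U_a$ and the bookkeeping of twisted module structures that the paper only alludes to.
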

\begin{proof}
 	Twisting the resolution morphism $\mathscr{V}$ by $\Gamma := A_{-1} + A\in \FormsT{1}{M}{0}$
    and using Prop.~\ref{prop:key} 
    we obtain the $L_\infty$-quasi-isomorphism 
    $\mathscr{V}^\Gamma \colon (\Omega(M,\FibC{}), D + \rho_\mu) \longrightarrow (\Omega(M;\mathcal{E}),D)$ as in theorem \ref{Mformality}.
    In fact, given the $L_\infty$-quasi-isomorphism $\mathscr{U}^\Gamma$ it is not hard to show
    that the dgla module structures on $\Omega(M;\mathcal{E})$ and $\Omega(M,\FibC{})$ over $(\Omega(M;\FibT{}),D)$
    and $(\Omega(M;\FibD{}),D + \partial)$, resp, obtained by twisting via $\Gamma$ coincide with those
    defined by the fiberwise structures $\lambda$ and $\rho$.
   This concludes the proof.
\end{proof}

\begin{remark}[Formality for chains in the Lie algebroid setting]
	The same techniques can be used to prove formality 
    for Hochschild chains in the Lie algebroid setting,
    whose original proof can be found in \cite{Calaque2006}.
\end{remark}

\end{document}